\newtheorem{thm}{Theorem}
\newtheorem{cor}[thm]{Corollary}
\newtheorem{definition}{Definition}
\newtheorem{example}{Example}
\newtheorem{remark}{Remark}
\begin{document}

\title[Extremal nonnegative solutions of the $M$-tensor equation]{Computing the extremal nonnegative solutions of the $M$-tensor equation with a nonnegative right side vector}

\author{Chun-Hua Guo}
\address{Department of Mathematics and Statistics, 
University of Regina, Regina,
Saskatchewan S4S 0A2, Canada} 
\email{chun-hua.guo@uregina.ca}

\subjclass{Primary 65F10; Secondary 15A69, 65H10}

\keywords{$Z$-tensor; $M$-tensor; Maximal nonnegative solution; Minimal nonnegative solution; Iterative methods; Linear convergence.}

\begin{abstract}

We consider the tensor equation whose coefficient tensor is a nonsingular $M$-tensor and whose right side vector is nonnegative. 
Such a tensor equation may have a large number of nonnegative solutions. It is already known that the tensor equation has a maximal nonnegative solution and a minimal nonnegative solution (called extremal solutions collectively). 
However, the existing proofs do not show how the extremal solutions can be computed. The existing numerical methods can find one of the nonnegative solutions, without knowing whether the computed solution is an extremal solution. In this paper, we present new proofs for the existence of extremal solutions. Our proofs are much  shorter than existing ones and more importantly they give numerical methods that can compute the extremal solutions. Linear convergence of these numerical methods is also proved under mild assumptions. Some of our discussions also allow the coefficient tensor to be a $Z$-tensor or allow the right side vector to have some negative elements. 

\end{abstract}

\maketitle

\section{Introduction}
\label{sec1}
We consider the tensor equation 
\begin{equation}\label{Teq}
{\mathcal A}x^{m-1}=b,
\end{equation}
where $x, b\in {\mathbb R}^n$ and ${\mathcal A}$ is a real-valued $m$th-order $n$-dimensional tensor that has the form 
$$
{\mathcal A}=(a_{i_1i_2\ldots i_m}), \quad a_{i_1i_2\ldots i_m}\in {\mathbb R}, \quad 1\le i_1,i_2,\ldots, i_m\le n, 
$$
and ${\mathcal A}x^{m-1}\in {\mathbb R}^n$ has elements 
$$
({\mathcal A}x^{m-1})_i=\sum_{i_2,\ldots, i_m=1}^{n} a_{ii_2\ldots i_m} x_{i_2}\cdots x_{i_m}, i=1,2, \ldots, n. 
$$
Later on, we will also use $\{x_k\}$ to denote a sequence of vectors. 
The $i$th element of a vector $x$ will then be denoted by $(x)_i$ (instead of $x_i$) to avoid confusion. 
The element $a_{i_1i_2\ldots i_m}$ of the tensor ${\mathcal A}$ will also be denoted by ${\mathcal A}_{i_1i_2\ldots i_m}$.
The tensor equation \eqref{Teq} is also called a multilinear system of equations. It appears in many applications including data mining, numerical partial differential equations, and tensor complementarity problems; see \cite{LXG21} and the references cited therein. 

We denote the set of all  real-valued $m$th-order $n$-dimensional tensors by 
${\mathbb R}^{[m,n]}$. A tensor ${\mathcal A}=(a_{i_1i_2\ldots i_m})\in {\mathbb R}^{[m,n]}$ is called semi-symmetric \cite{NQ15}  if $a_{i j_2 \ldots j_m}=a_{i i_2 \ldots i_m}$, $1\le i\le n$, $j_2\ldots
j_m$ is any permutation of $i_2\ldots i_m$, $1\le i_2, \ldots, i_m\le n$.
For any tensor ${\mathcal A}=(a_{i_1i_2\ldots i_m})$, we have a semi-symmetric tensor  $\bar{{\mathcal A}}=(\bar{a}_{i_1i_2\ldots i_m})$ defined by 
$$
\bar{a}_{i_{1} i_{2}\ldots i_{m}}=\frac{1}{(m-1)!}%
\sum_{j_{2}\ldots j_{m}}a_{i_{1}j_{2}\ldots j_{m}},
$$
where $j_{2}\ldots j_{m}$ is any permutation of $i_{2}\ldots i_{m}.$ 
Then ${\mathcal A}x^{m-1}=\bar{{\mathcal A}}x^{m-1}$ for all $x\in {\mathbb R}^n$, and 
$({\mathcal A}x^{m-1})'=(m-1)\bar{{\mathcal A}}x^{m-2}$ (see \cite{LGL17,NQ15}), where for any ${\mathcal A}\in {\mathbb R}^{[m,n]}$, the $(i,j)$ element of ${\mathcal A}x^{m-2}\in {\mathbb R}^{n\times n}$ is defined by 
$$
({\mathcal A}x^{m-2})_{ij}=\sum_{i_3, \ldots, i_m=1}^n a_{iji_3\ldots i_m}{x}_{i_3}\cdots {x}_{i_m}. 
$$

We call $\lambda\in \mathbb R$ an eigenvalue and $x\in {\mathbb R}^n\setminus \{0\}$ a corresponding eigenvector of 
${\mathcal A}$ if \cite{QL17}
$$
{\mathcal A}x^{m-1}=\lambda x^{[m-1]}, 
$$
where for any positive real number $r$, $x^{[r]}\in {\mathbb R}^n$ is given by $(x^{[r]})_i=(x_i)^{r}$, $i=1,2,\ldots, n$. The spectral radius of ${\mathcal A}$  is the maximum modulus of its eigenvalues, and is denoted by  $\rho({\mathcal A})$. 

Let $[n]=\{1, 2, \ldots, n\}$. For $x, y\in {\mathbb R}^n$, we write $x\ge y$ if $x_i\ge y_i$ for all $i\in [n]$, 
and write $x> y$ if $x_i > y_i$ for all $i\in [n]$. If $x\ge 0$, we say $x$ is nonnegative; if $x>0$ we say $x$ is positive. 
The set ${\mathbb R}_+=\{x\in {\mathbb R}^n \ | \ x\ge 0\}$ will be used frequently. 
A solution $x$ of \eqref{Teq} is said to be a maximal nonnegative solution if $x\ge y$ for every solution $y\ge 0$; 
a solution $x$ of \eqref{Teq} is said to be a minimal nonnegative solution if $0\le x\le y$ for every solution $y\ge 0$. 
A tensor ${\mathcal A}$ is said to be nonnegative, denoted by ${\mathcal A}\ge 0$,  if all its elements are nonnegative. 
The identity tensor ${\mathcal I}\in {\mathbb R}^{[m,n]}$ is such that its diagonal elements are all ones and its off-diagonal elements are all zeros, i.e., 
${\mathcal I}_{ii\ldots i}=1$ for $i\in [n]$ and ${\mathcal I}_{i_1i_2\ldots i_m}=0$ elsewhere. 

\begin{definition}\cite{DQW13}
A tensor  ${\mathcal A}\in {\mathbb R}^{[m,n]}$   is called a $Z$-tensor if its off-diagonal elements are nonpositive. 
If ${\mathcal A}$ can be written as ${\mathcal A}=s{\mathcal I}-{\mathcal B}$ with ${\mathcal B}\ge 0$ and $s>\rho({\mathcal B})$, 
then the tensor ${\mathcal A}$ is called a nonsingular $M$-tensor.
\end{definition}

For ${\mathcal B}\ge 0$, $\rho({\mathcal B})$ can be found by the power method in \cite{NQZ09} and the Newton--Noda iteration 
in \cite{LGL17}. Thus, one can determine whether a $Z$-tensor is a nonsingular $M$-tensor using the definition. 
It is known \cite{DQW13} that a $Z$-tensor ${\mathcal A}$ is a nonsingular ${\mathcal M}$-tensor if and only if 
${\mathcal A}x^{m-1}>0$ for some $x>0$. It follows that the diagonal elements of a nonsingular ${\mathcal M}$-tensor are all positive.

Suppose ${\mathcal A}$ is a nonsingular $M$-tensor. When $b$ is positive, equation \eqref{Teq} has a unique positive solution \cite{DW16}. When $b$ is nonnegative, equation \eqref{Teq} has nonnegative solutions and moreover it has a minimal nonnegative solution \cite{LQX17} and  has a maximal nonnegative solution \cite{LGW20}. 

The following simple example is an extension of Example 1.1 in \cite{BHLZ21}. It shows that the number of nonnegative solutions could be huge when $b\ge 0$. 

\begin{example}\label{ex1}
Let $k\ge 1$ be an integer and let ${\mathcal A}=(a_{i_1i_2i_3i_4})\in {\mathbb R}^{[4, 2k]}$, where 
$a_{iiii}=1$ for $i\in [2k]$, $a_{2i-1,2i-1,2i-1,2i}=-2$ for $i\in [k]$, and all other elements are zero. 
It is clear that ${\mathcal A}$ is a $Z$-tensor with ${\mathcal A}x^3>0$ for $x=[3,1, \ldots, 3,1]^T$. 
So ${\mathcal A}$ is a nonsingular $M$-tensor. For $b=[0,1, \ldots,0,1]^T$, we find that the equation 
${\mathcal A}x^3=b$ has $2^k$ solutions $x=[x_1,x_2,\ldots, x_{2k-1}, x_{2k}]^T$, 
where for $i\in [k]$,  $[x_{2i-1}, x_{2i}]=[0,1]^T$ or $[2,1]^T$. 
The minimal nonnegative solution is $[0,1, \ldots, 0,1]^T$ and the maximal nonnegative solution is  $[2,1,\ldots, 2,1]^T$. 
\end{example}

Since equation \eqref{Teq} may have a large number of nonnegative solutions, it is unlikely that each of these solutions is of practical interest. Intuitively, the extremal nonnegative solutions are of particular interest.  By computing the maximal nonnegative solution, we can answer the question whether the equation has a positive solution. 
By computing the extremal nonnegative solutions, we get bounds for all other nonnegative solutions. 

In \cite{LQX17}, the tensor complementarity problem (TCP):
$$
x\ge 0, \quad {\mathcal A}x^{m-1}-b\ge 0, \quad x^T({\mathcal A}x^{m-1}-b)=0
$$
is considered. When ${\mathcal A}$ is a $Z$-tensor and $b\in {\mathbb R}_+$, it is shown in \cite{LQX17} that the solution set of the TCP is the same as the set of all nonnegative solutions of equation \eqref{Teq}. 
So a sparsest solution to the TCP is the minimal nonnegative solution of equation \eqref{Teq}.
The problem of finding a sparsest solution to the TCP is of practical interest \cite{LQX17}. 
Note that, for this application, the case $b\ge 0$ with some zero elements is much more interesting than the case $b>0$ (where the equation has a unique positive solution and there are no other nonnegative solutions). 

When ${\mathcal A}$ is a nonsingular $M$-tensor and $b>0$, the unique positive solution $x({\mathcal A}, b)$ of ${\mathcal A}x^{m-1}=b$
is the solution of particular interest. But some elements of $b$ may be very tiny. What happens to $x({\mathcal A}, b)$ if $b$ decreases monotonically towards a vector $b^{(0)}$ with 
some zero elements? We will see in the next section that $x({\mathcal A}, b)$ converges to the maximal nonnegative solution of the equation 
${\mathcal A}x^{m-1}=b^{(0)}$. 

In \cite{LGW20}, the authors state in the introduction that their purpose is to find the largest (maximal)  nonnegative solution. But their algorithms there only find a nonnegative solution, which is usually not maximal. Numerical methods have also been presented in \cite{BHLZ21,LGX20,LXG21}. Those methods can find a nonnegative solution, but the solution is usually not maximal and cannot be guaranteed to be minimal. 

While our main interest is on equation \eqref{Teq} with ${\mathcal A}$ being a nonsingular $M$-tensor and $b\ge 0$, some of our discussions also allow 
${\mathcal A}$ to be a $Z$-tensor or allow $b$ to have some negative elements. In Section \ref{sec2}, 
we present new proofs for the existence of extremal nonnegative solutions by using simple iterative methods. 
These iterative methods can actually compute the extremal solutions. Some other results also follow directly from the existence theorems and their proofs. 
In Section \ref{sec3}, we show how the simple iterations used in Section \ref{sec2} can be generalized to have faster convergence. 
Linear convergence of these iterative methods is also proved under mild assumptions. Some concluding remarks are given in Section \ref{sec4}.

\section{Existence of extremal nonnegative solutions} 
\label{sec2}

The following theorem is a main result in \cite{LQX17} (stated differently in \cite{LQX17}; see \cite[Theorem 3]{LQX17} and its proof). 
The proof there is based on several other results and does not show how the minimal nonnegative solution can be computed. 
Our new proof here is very short and it provides a way to compute the minimal solution. 
The approach we take here is similar to the approach we used many years ago for determining the existence of the minimal nonnegative solution of $M$-matrix algebraic Riccati equations \cite{G01, GL00}.

\begin{thm}\label{thmmin}
Let ${\mathcal A}$ be a $Z$-tensor and $b\in {\mathbb R}_+$. Suppose that  
$${\mathcal S}_g=\{x\in {\mathbb R}_+ \ | \   {\mathcal A}x^{m-1}\ge b\}\ne \emptyset. 
$$  
Then ${\mathcal S}_g$ has a minimal element that is also the minimal nonnegative solution of ${\mathcal A}x^{m-1}=b$. 
\end{thm}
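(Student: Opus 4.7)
The plan is to construct the minimum of $\mathcal{S}_g$ as the limit of a monotone iteration starting from the zero vector, mimicking the approach used for $M$-matrix algebraic Riccati equations. First, I would choose a scalar $s$ with $s>0$ and $s\ge \max_i {\mathcal A}_{ii\ldots i}$. Since ${\mathcal A}$ is a $Z$-tensor, the off-diagonal entries of ${\mathcal C} = s{\mathcal I}-{\mathcal A}$ are $\ge 0$, and by the choice of $s$ its diagonal entries are also $\ge 0$, so ${\mathcal C}\ge 0$. The equation ${\mathcal A}x^{m-1}=b$ is then equivalent to $s x^{[m-1]} = {\mathcal C}x^{m-1}+b$, which (since $s>0$ and $b\ge 0$) suggests the fixed-point iteration
$$
x_0=0, \qquad x_{k+1}^{[m-1]} = \frac{1}{s}\bigl({\mathcal C} x_k^{m-1}+b\bigr),
$$
with $x_{k+1}$ recovered componentwise by extracting the nonnegative $(m-1)$-th root.

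Next, I would establish two monotonicity properties by induction. The first is that $x_k\le x_{k+1}$ for all $k$: the base case $x_0=0\le x_1$ follows from $b\ge 0$, and the inductive step uses that $x\mapsto {\mathcal C}x^{m-1}$ is componentwise nondecreasing on ${\mathbb R}_+$ when ${\mathcal C}\ge 0$. The second, and more essential one, is that $x_k\le \bar{x}$ for every fixed $\bar{x}\in {\mathcal S}_g$: rewriting ${\mathcal A}\bar{x}^{m-1}\ge b$ as $\bar{x}^{[m-1]}\ge ({\mathcal C}\bar{x}^{m-1}+b)/s$ allows the same inductive argument, now starting from $x_0=0\le \bar{x}$.

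Since $\{x_k\}$ is monotone nondecreasing and bounded above, it converges to some $x^*\in {\mathbb R}_+$. By continuity of the iteration map, $s(x^*)^{[m-1]}={\mathcal C}(x^*)^{m-1}+b$, which is equivalent to ${\mathcal A}(x^*)^{m-1}=b$; thus $x^*$ is a nonnegative solution of \eqref{Teq} and in particular lies in ${\mathcal S}_g$. Combined with the bound $x^*\le \bar{x}$ for all $\bar{x}\in {\mathcal S}_g$, this shows $x^*$ is the minimum of ${\mathcal S}_g$, and since every nonnegative solution of \eqref{Teq} belongs to ${\mathcal S}_g$, $x^*$ is also the minimal nonnegative solution.

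The main obstacle is the second monotonicity step: it is crucial that the bound $x_k\le \bar{x}$ uses only the inequality ${\mathcal A}\bar{x}^{m-1}\ge b$, not an equation, since at the outset we do not yet know that any nonnegative solution exists. Once the splitting ${\mathcal A}=s{\mathcal I}-{\mathcal C}$ with ${\mathcal C}\ge 0$ is fixed, the remaining ingredients---well-definedness of the $(m-1)$-th root on ${\mathbb R}_+$, monotonicity of $x\mapsto {\mathcal C}x^{m-1}$, and passage to the limit---are routine, and notably no control on $\rho({\mathcal C})$ is required.
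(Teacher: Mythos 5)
Your proof is correct and follows essentially the same route as the paper: a monotone fixed-point iteration from $x_0=0$ based on a splitting ${\mathcal A}={\mathcal D}-{\mathcal B}$ with ${\mathcal D}$ a diagonal tensor having positive diagonal and ${\mathcal B}\ge 0$, using ${\mathcal A}\bar{x}^{m-1}\ge b$ (not equality) to get the upper bound. The only cosmetic difference is that you take the specific choice ${\mathcal D}=s{\mathcal I}$ rather than allowing an arbitrary positive diagonal; both yield the same argument.
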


\begin{proof}
We write ${\mathcal A}={\mathcal D}-{\mathcal B}$, where ${\mathcal D}$ is a diagonal tensor with positive diagonal elements and ${\mathcal B}\ge 0$. Equation \eqref{Teq}
 becomes 
${\mathcal D}x^{m-1}={\mathcal B}x^{m-1}+b$. 
Then we have the fixed-point iteration, given implicitly as follows: 
\begin{equation}\label{eqZ}
{\mathcal D}x_{k+1}^{m-1}={\mathcal B}x_{k}^{m-1}+b. 
\end{equation}
(This is the Jacobi iteration when the diagonal elements of ${\mathcal A}$ are positive and ${\mathcal D}$ is the diagonal part of ${\mathcal A}$.) 
Note that ${\mathcal D}x_{k+1}^{m-1}=Dx_{k+1}^{[m-1]}$, where $D$ is the diagonal matrix having the diagonal elements of the tensor ${\mathcal D}$ on the diagonal.  When ${\mathcal B}x_{k}^{m-1}+b\ge 0$, the iteration can be given explicitly as 
$$
x_{k+1}=\left (D^{-1}({\mathcal B}x_{k}^{m-1}+b)\right )^{[1/(m-1)]},
$$ 
but the implicit form will be more convenient for discussions. 

Let $x$ be any element in ${\mathcal S}_g$.
Then $x\ge 0$ and ${\mathcal B}x^{m-1}+b\le {\mathcal D}x^{m-1}$. 
Take $x_0=0$. We can generate a sequence $\{x_k\}$ by iteration \eqref{eqZ}. It is clear that 
$x_0\le x_1$. Suppose $x_{k-1}\le x_k$ ($k\ge 1$). Then 
${\mathcal D}x_{k}^{m-1}= {\mathcal B}x_{k-1}^{m-1}+b\le {\mathcal B}x_{k}^{m-1}+b={\mathcal D}x_{k+1}^{m-1}$. Thus $x_k\le x_{k+1}$. 
Therefore, $x_k\le x_{k+1}$ for all $k\ge 0$. 
Also, $x_0\le x$. Suppose $x_k\le x$ ($k\ge 0$). Then 
 ${\mathcal D}x_{k+1}^{m-1}={\mathcal B}x_{k}^{m-1}+b\le {\mathcal B}x^{m-1}+b\le  {\mathcal D}x^{m-1}$. 
Thus $x_{k+1}\le x$. Therefore, $x_k\le x$ for all $k\ge 0$. Now, $\{x_k\}$ is monotonically increasing and bounded above by $x$. Thus, 
 $\lim_{k\to \infty} x_k=x_{*}$ exists and $x_*\le x$. 
Letting $k\to \infty$ in \eqref{eqZ}, we see that  $x_*$ is a nonnegative solution of \eqref{Teq} and $x_*\le x$ for every $x$ in ${\mathcal S}_g$. 
In particular, $x_*\le x$ for every nonnegative solution $x$ of \eqref{Teq}, so $x_*$ is the minimal nonnegative solution of  \eqref{Teq}, and also the minimal element in ${\mathcal S}_g$. 
\end{proof}

\begin{cor}
Let ${\mathcal A}$ be a nonsingular $M$-tensor and $b\ge 0$. Then equation \eqref{Teq} has a minimal nonnegative solution. 
\end{cor}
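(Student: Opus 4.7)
The plan is to deduce the corollary directly from Theorem \ref{thmmin}. Since a nonsingular $M$-tensor is by definition a $Z$-tensor, and $b\in{\mathbb R}_+$ is given, the only hypothesis of Theorem \ref{thmmin} that needs verification is that the set
\[
{\mathcal S}_g=\{x\in {\mathbb R}_+\ |\ {\mathcal A}x^{m-1}\ge b\}
\]
is nonempty.

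To produce an element of ${\mathcal S}_g$, I would invoke the characterization quoted from \cite{DQW13} in the paragraph after the definition of an $M$-tensor: since ${\mathcal A}$ is a nonsingular $M$-tensor, there exists some $y>0$ with ${\mathcal A}y^{m-1}>0$. By homogeneity, ${\mathcal A}(ty)^{m-1}=t^{m-1}{\mathcal A}y^{m-1}$ for $t>0$, so choosing $t$ large enough gives ${\mathcal A}(ty)^{m-1}\ge b$; this $ty$ lies in ${\mathcal S}_g$.

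With ${\mathcal S}_g\ne\emptyset$ established, Theorem \ref{thmmin} immediately yields that ${\mathcal S}_g$ has a minimal element which is the minimal nonnegative solution of \eqref{Teq}. There is no real obstacle here; the only thing to be careful about is using the exact characterization of nonsingular $M$-tensors stated earlier, together with the scalar rescaling, to produce the required supersolution. The corollary then follows in one line from Theorem \ref{thmmin}.
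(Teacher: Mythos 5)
Your proof is correct and is essentially identical to the paper's: both invoke the characterization that a nonsingular $M$-tensor satisfies ${\mathcal A}y^{m-1}>0$ for some $y>0$, then scale $y$ by a large $t>0$ using degree-$(m-1)$ homogeneity to get an element of ${\mathcal S}_g$, after which Theorem~\ref{thmmin} applies directly.
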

\begin{proof}
Since ${\mathcal A}$ be a nonsingular $M$-tensor, we have ${\mathcal A}\hat{x}^{m-1}>0$ for some $\hat{x}>0$. Then for scalar $t>0$ sufficiently large, 
$\tilde{x}=t\hat{x}>0$ is such that 
${\mathcal A}\tilde{x}^{m-1}=t^{m-1}{\mathcal A}\hat{x}^{m-1} \ge b$.   
\end{proof}

The next result is already known in \cite{LGX20}. It also follows quickly from our proof of Theorem \ref{thmmin}. 
\begin{cor}
Let ${\mathcal A}$ be a nonsingular $M$-tensor and $b>0$. Then the unique positive solution of \eqref{Teq} is the only nonnegative solution. 
\end{cor}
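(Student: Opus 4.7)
The plan is to combine the iterative construction in the proof of Theorem \ref{thmmin} with the uniqueness of the positive solution cited from \cite{DW16}. Since $\mathcal{A}$ is a nonsingular $M$-tensor and $b > 0$, the unique positive solution $x^{*}$ trivially lies in $\mathcal{S}_g$, so Theorem \ref{thmmin} applies and a minimal nonnegative solution $x_{\min}$ of \eqref{Teq} exists.

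The crucial step will be to show that $x_{\min}$ is strictly positive. For this I would revisit the iterates $\{x_k\}$ defined by \eqref{eqZ} with $x_0=0$. Because $b>0$ and $\mathcal{D}$ is a diagonal tensor with strictly positive entries, the very first iterate satisfies $\mathcal{D}x_1^{m-1}=b>0$, which forces $x_1>0$. The proof of Theorem \ref{thmmin} shows that $\{x_k\}$ is monotonically nondecreasing with limit $x_{\min}$, so $x_{\min}\ge x_1>0$ and $x_{\min}$ is itself a positive solution of \eqref{Teq}. By the uniqueness of the positive solution, $x_{\min}=x^{*}$.

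To finish, let $y$ be an arbitrary nonnegative solution. Minimality of $x_{\min}$ yields $y\ge x_{\min}=x^{*}>0$, so $y$ is positive, and uniqueness of the positive solution forces $y=x^{*}$. I do not anticipate any genuine obstacle: the argument is essentially a two-line consequence of the monotone iteration scheme together with uniqueness of positive solutions. The only place the hypothesis $b>0$ is really used (as opposed to $b\ge 0$) is in deducing $x_1>0$, which is precisely the step that fails in Example \ref{ex1} where some components of $b$ vanish and many distinct nonnegative solutions can coexist.
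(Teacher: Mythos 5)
Your argument is correct and is essentially the same as the paper's (very terse) proof, which simply observes that the iterates from the proof of Theorem \ref{thmmin} show the minimal nonnegative solution is positive when $b>0$; you have just spelled out the detail that $x_1>0$ forces $x_{\min}\ge x_1>0$, and then invoked uniqueness of the positive solution together with minimality to conclude.
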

\begin{proof}
From our proof of Theorem \ref{thmmin}, we see that the minimal nonnegative solution is positive in this case. 
\end{proof}

\begin{cor}
Let ${\mathcal A}$ be a $Z$-tensor and $b\in {\mathbb R}_+$. Suppose that  $$
{\mathcal S}_g=\{x\in {\mathbb R}_+ \ | \   {\mathcal A}x^{m-1}\ge b\}\ne \emptyset
$$
 and let $x_{\min}({\mathcal A}, b)$ be the minimal nonnegative solution of 
${\mathcal A}x^{m-1}=b$. 
If any element of $b$ decreases but remains nonnegative, or if any diagonal element of ${\mathcal A}$ increases, or if any off-diagonal element of ${\mathcal A}$ increases but remains nonpositive, then the new equation 
$\tilde{{\mathcal A}}x^{m-1}=\tilde{b}$ also has a minimal nonnegative solution $x_{\min}(\tilde{{\mathcal A}}, \tilde{b})$. 
Moreover, $x_{\min}(\tilde{{\mathcal A}}, \tilde{b})\le x_{\min}({\mathcal A}, b)$. 
\end{cor}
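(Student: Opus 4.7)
The plan is to reduce the claim to Theorem \ref{thmmin} by showing that the original extremal solution $x_* := x_{\min}(\mathcal{A}, b)$ is already a feasible point for the modified inequality system, and then reading off the comparison from the characterization of $x_{\min}(\tilde{\mathcal{A}}, \tilde{b})$ as the minimal element of
\[
\tilde{\mathcal{S}}_g = \{x \in \mathbb{R}^n_+ \mid \tilde{\mathcal{A}} x^{m-1} \geq \tilde{b}\}.
\]

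First I would verify that the modified data still satisfy the hypotheses of Theorem \ref{thmmin}. By assumption the off-diagonal entries of $\tilde{\mathcal{A}}$ remain nonpositive, so $\tilde{\mathcal{A}}$ is still a $Z$-tensor, and $\tilde{b} \in \mathbb{R}^n_+$. Setting $\Delta \mathcal{A} := \tilde{\mathcal{A}} - \mathcal{A}$, each permitted perturbation contributes an entrywise nonnegative change: diagonal entries of $\mathcal{A}$ increase, off-diagonal entries move upward toward zero while remaining nonpositive, and untouched entries contribute nothing. Hence $\Delta \mathcal{A} \geq 0$ and $b - \tilde{b} \geq 0$.

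Second, since $x_* \geq 0$, monotonicity of the map $\mathcal{C} \mapsto \mathcal{C} x_*^{m-1}$ in its tensor argument yields
\[
\tilde{\mathcal{A}} x_*^{m-1} = \mathcal{A} x_*^{m-1} + (\Delta \mathcal{A}) x_*^{m-1} \geq \mathcal{A} x_*^{m-1} = b \geq \tilde{b},
\]
so $x_* \in \tilde{\mathcal{S}}_g$, and in particular $\tilde{\mathcal{S}}_g \neq \emptyset$. Theorem \ref{thmmin} therefore applies to $(\tilde{\mathcal{A}}, \tilde{b})$ and produces $x_{\min}(\tilde{\mathcal{A}}, \tilde{b})$, characterized as the minimal element of $\tilde{\mathcal{S}}_g$. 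Since $x_* \in \tilde{\mathcal{S}}_g$, minimality gives $x_{\min}(\tilde{\mathcal{A}}, \tilde{b}) \leq x_* = x_{\min}(\mathcal{A}, b)$.

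I do not expect a substantive obstacle. The only point that requires a moment of care is the sign convention on off-diagonal perturbations: because those entries are nonpositive, an increase corresponds to a nonnegative change in $\Delta \mathcal{A}$, not a negative one. Once this is handled correctly, the whole argument is a one-line feasibility check followed by an appeal to Theorem \ref{thmmin}.
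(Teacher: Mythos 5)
Your proof is correct and follows essentially the same route as the paper: the paper observes that ${\mathcal S}_g\subseteq \tilde{\mathcal S}_g$ and appeals to Theorem~\ref{thmmin}, whereas you only verify that the single point $x_{\min}({\mathcal A},b)$ lies in $\tilde{\mathcal S}_g$, which is all that is needed. The sign bookkeeping on $\Delta{\mathcal A}\ge 0$ is handled correctly, and the comparison then follows from the minimality characterization exactly as in the paper.
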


\begin{proof}
Under any of those changes, $\tilde{{\mathcal A}}$ is a $Z$-tensor and $\tilde{b}\in {\mathbb R}_+$. 
Let $\tilde{{\mathcal S}}_g=\{x\in {\mathbb R}_+ \ | \   \tilde{{\mathcal A}}x^{m-1}\ge \tilde{b}\}$. 
Then  ${\mathcal S}_g\subseteq \tilde{{\mathcal S}}_g$ and the conclusions follow immediately.  
\end{proof}

The following theorem has been proved in \cite{LGW20}. The proof there is somewhat complicated and does not show how the maximal solution can be computed. 
Here we present a short proof and also a way to compute the maximal solution. 
\begin{thm}\label{MMax}
Let ${\mathcal A}$ be a nonsingular $M$-tensor and
suppose that  ${\mathcal S}_l=\{x\in {\mathbb R}_+ \ | \   {\mathcal A}x^{m-1}\le b\}\ne \emptyset$.  
Then ${\mathcal S}_l$ has a maximal element that is also the maximal nonnegative solution of 
${\mathcal A}x^{m-1}=b$. 
\end{thm}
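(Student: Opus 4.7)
The plan is to dualize the proof of Theorem~\ref{thmmin}: use the same splitting $\mathcal{A}=\mathcal{D}-\mathcal{B}$ and the same fixed-point iteration $\mathcal{D}x_{k+1}^{m-1}=\mathcal{B}x_k^{m-1}+b$, but now initialize from an upper bound so that the sequence is monotonically decreasing. For the initial vector I would exploit the defining property of a nonsingular $M$-tensor: pick $\hat{x}>0$ with $\mathcal{A}\hat{x}^{m-1}>0$ and set $x_0=t\hat{x}$ with $t>0$ large enough that $\mathcal{A}x_0^{m-1}=t^{m-1}\mathcal{A}\hat{x}^{m-1}\ge b$. Then $x_0>0$ and $\mathcal{A}x_0^{m-1}>0$ strictly. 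The inequality $\mathcal{A}x_0^{m-1}\ge b$ gives $\mathcal{D}x_0^{m-1}\ge\mathcal{B}x_0^{m-1}+b=\mathcal{D}x_1^{m-1}$, so $x_0\ge x_1$, and a dual induction to the one in Theorem~\ref{thmmin} yields $x_k\ge x_{k+1}$ for all $k\ge 0$. The sequence is bounded below by $0$, so $x_*=\lim_k x_k$ exists, and passing to the limit in the iteration gives $\mathcal{A}x_*^{m-1}=b$ and $x_*\in\mathcal{S}_l$.

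It remains to show $x_*\ge y$ for every $y\in\mathcal{S}_l$, which will make $x_*$ the maximal element of $\mathcal{S}_l$ and, since every nonnegative solution of \eqref{Teq} lies in $\mathcal{S}_l$, the maximal nonnegative solution of \eqref{Teq}. Provided the initial comparison $x_0\ge y$ has been established, the relation propagates by induction: if $x_k\ge y$, then $\mathcal{D}x_{k+1}^{m-1}=\mathcal{B}x_k^{m-1}+b\ge\mathcal{B}y^{m-1}+b\ge\mathcal{B}y^{m-1}+\mathcal{A}y^{m-1}=\mathcal{D}y^{m-1}$ (using $\mathcal{B}\ge 0$ and $y\in\mathcal{S}_l$), so $x_{k+1}\ge y$, and in the limit $x_*\ge y$.

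The main obstacle, with no counterpart in the minimal-solution argument, is therefore the initial comparison $x_0\ge y$ for an arbitrary $y\in\mathcal{S}_l$. I plan to prove it by a scaling/contact-point argument. Fix $y\in\mathcal{S}_l$ with $y\ne 0$ (the case $y=0$ being trivial) and set $t^*=\max_{1\le i\le n}y_i/(x_0)_i>0$; this maximum is attained at some index $i$ with $t^*(x_0)_i=y_i$, while $t^*x_0\ge y$ componentwise. Suppose for contradiction that $t^*>1$. Since $\mathcal{A}x_0^{m-1}>0$ strictly, multiplication by $(t^*)^{m-1}>1$ gives $\mathcal{A}(t^*x_0)^{m-1}=(t^*)^{m-1}\mathcal{A}x_0^{m-1}>\mathcal{A}x_0^{m-1}\ge b\ge\mathcal{A}y^{m-1}$ strictly in every component, so in particular $[\mathcal{A}(t^*x_0)^{m-1}-\mathcal{A}y^{m-1}]_i>0$. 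On the other hand, expanding the $i$th component, the diagonal term $a_{ii\ldots i}[(t^*(x_0)_i)^{m-1}-y_i^{m-1}]$ vanishes because $t^*(x_0)_i=y_i$, while every off-diagonal term is the product of a nonpositive entry $a_{ij_2\ldots j_m}$ of the $Z$-tensor $\mathcal{A}$ with the nonnegative difference $(t^*x_0)_{j_2}\cdots(t^*x_0)_{j_m}-y_{j_2}\cdots y_{j_m}\ge 0$. The sum is therefore nonpositive, contradicting the strict positivity above, so $t^*\le 1$ and $y\le t^*x_0\le x_0$.
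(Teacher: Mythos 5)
Your proof is correct, and its key step differs genuinely from the paper's. You and the paper both use the Jacobi splitting $\mathcal{A}=\mathcal{D}-\mathcal{B}$, the decreasing iteration, and the induction showing $x_k\ge y$ propagates once $x_0\ge y$; the real obstacle in both proofs is establishing a single $x_0$ that dominates \emph{every} $y\in\mathcal{S}_l$. The paper handles this indirectly: it perturbs to a strictly positive right side $\hat b>0$ with $\hat b\ge b$, re-runs the monotone-decrease argument on $\mathcal{A}x^{m-1}=\hat b$, and invokes uniqueness of the positive solution of that auxiliary equation (a result of Ding--Wei) to conclude that its solution $\hat x_*$ bounds all of $\{x\ge 0:\mathcal{A}x^{m-1}\le\hat b\}\supseteq\mathcal{S}_l$, then restarts the original iteration from $\hat x_*$. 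You instead prove $x_0\ge y$ directly by a Perron--Frobenius-style contact-point argument: set $t^*=\max_i y_i/(x_0)_i$, note that $t^*>1$ would force the $i$th component of $\mathcal{A}(t^*x_0)^{m-1}-\mathcal{A}y^{m-1}$ to be simultaneously $>0$ (from $(t^*)^{m-1}\mathcal{A}x_0^{m-1}>\mathcal{A}x_0^{m-1}\ge b\ge\mathcal{A}y^{m-1}$) and $\le 0$ (diagonal term vanishes at the contact index, off-diagonal terms are nonpositive coefficients times nonnegative factors), hence $t^*\le1$. This is self-contained, avoids the auxiliary equation and the uniqueness theorem, and directly establishes the assertion in the paper's remark that \emph{any} $x_0$ with $x_0>0$, $\mathcal{A}x_0^{m-1}>0$, $\mathcal{A}x_0^{m-1}\ge b$ works — a claim the paper leaves implicit.

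One small presentational caveat: since the theorem does not require $b\ge 0$, the iterate $x_{k+1}$ is only well-defined in $\mathbb{R}_+$ once you know $\mathcal{B}x_k^{m-1}+b\ge 0$, which follows from $x_k\ge y$ via $\mathcal{B}x_k^{m-1}+b\ge\mathcal{B}y^{m-1}+b\ge\mathcal{D}y^{m-1}\ge 0$. You should therefore run the two inductions ($x_k\ge x_{k+1}$ and $x_k\ge y$) together, as the paper does, rather than stating the monotonicity first and the comparison with $y$ afterward; as written the first paragraph tacitly assumes well-definedness that is only justified by the second.
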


\begin{proof}
Let ${\mathcal D}$  be the diagonal part of ${\mathcal A}$ and write ${\mathcal A}={\mathcal D}-{\mathcal B}$. 
Then the diagonal elements of ${\mathcal D}$ are positive and ${\mathcal B}\ge 0$. Equation \eqref{Teq} becomes 
${\mathcal D}x^{m-1}={\mathcal B}x^{m-1}+b$. 
Then we have the fixed-point iteration (Jacobi iteration), given implicitly as follows: 
\begin{equation}\label{eqJ}
{\mathcal D}x_{k+1}^{m-1}={\mathcal B}x_{k}^{m-1}+b. 
\end{equation}

Let $x$ be any element in ${\mathcal S}_l$.
Then $x\ge 0$ and ${\mathcal B}x^{m-1}+b\ge {\mathcal D}x^{m-1}$. 
Take $x_0\ge x$ such that ${\mathcal A}x_0^{m-1}\ge b$, so ${\mathcal B}x_0^{m-1}+b\le {\mathcal D}x_0^{m-1}$. 
Note that 
$$
{\mathcal B}x_0^{m-1}+b\ge {\mathcal B}x^{m-1}+b\ge {\mathcal D}x^{m-1}\ge 0. 
$$
Thus $x_1$ is determined by iteration \eqref{eqJ} and ${\mathcal D}x_{1}^{m-1}\ge {\mathcal D}x^{m-1}$, so $x_1\ge x$. 
Also,  
${\mathcal D}x_{1}^{m-1}\le  {\mathcal D}x_{0}^{m-1}$, so $x_1\le  x_0$.  
By induction, we can show that 
$x_{k+1}\le x_{k}$ and $x_k\ge x$ for all $k\ge 0$. Therefore, $\lim_{k\to \infty} x_k=x_{*}$ exists and $x_*\ge x$. 
Then $x_*$ is a nonnegative solution of \eqref{Teq}. 

We still need to show that we can choose one fixed $x_0$ such that ${\mathcal A}x_0^{m-1}\ge b$ and $x_0\ge x$ 
for all $x\in {\mathcal S}_l$. To this end, we take any $\hat{b}>0$ such that $\hat{b}\ge b$, and apply the above argument 
to the equation ${\mathcal A}x^{m-1}=\hat{b}$. We can conclude that  ${\mathcal A}x^{m-1}=\hat{b}$ 
has a nonnegative solution $\hat{x}_*$ (actually the unique positive solution) with $\hat{x}_*\ge x$ for every element $x$ in 
$\{x\in {\mathbb R}_+ \ | \   {\mathcal A}x^{m-1}\le \hat{b}\}$ and thus for every element $x$ in ${\mathcal S}_l$. 

We now return to the equation ${\mathcal A}x^{m-1}=b$ and take $x_0\ge \hat{x}_*$ such that ${\mathcal A}x_0^{m-1}\ge b$. 
Then the sequence $\{x_k\}$ from the Jacobi iteration converges to a nonnegative solution $x_*$ of \eqref{Teq} and  
$x_*\ge x$ for all elements $x$ in ${\mathcal S}_l$. 
This $x_*$ is then the maximal element in ${\mathcal S}_l$ and also the maximal nonnegative solution of  \eqref{Teq}. 
\end{proof}

\begin{remark}
From the proof, we see that the maximal nonnegative solution (when exists) can be found by iteration \eqref{eqJ} using any $x_0$ such that 
$$
x_0>0, \quad {\mathcal A}x_0^{m-1}>0,\quad {\mathcal A}x_0^{m-1}\ge b. 
$$
In other words, we can take $x_0$ to be the unique positive solution of ${\mathcal A}x_0^{m-1}=\hat{b}$, where $\hat{b}$ is any vector such that 
$\hat{b}>0$ and $\hat{b}\ge b$. 
\end{remark}

\begin{remark}
If ${\mathcal A}$ is a $Z$-tensor, but not a nonsingular $M$-tensor, then equation \eqref{Teq} may not have a maximal nonnegative solution when it has nonnegative solutions.  One example is the equation \eqref{Teq} with ${\mathcal A}\in {\mathbb R}^{[4,3]}$ given by: 
$$
a_{1111}=0, a_{2222}=a_{3333}=1, a_{1112}=a_{3111}=-1, \mbox{ and } a_{i_1i_2i_3i_4}=0 \mbox{ elsewhere,}
$$
and $b=[0,0,1]^T$. The equation has infinitely many nonnegative solutions, given by $[c, 0, (1+c^3)^{1/3}]^T$ for any $c\ge 0$. 
The minimal nonnegative solution is $[0, 0, 1]^T$, but the maximal nonnegative solution does not exist. 
\end{remark}

\begin{cor}
Let ${\mathcal A}$ be a nonsingular $M$-tensor and $b\ge 0$. Then equation \eqref{Teq} has a maximal nonnegative solution. 
\end{cor}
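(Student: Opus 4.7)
The plan is to reduce this corollary to Theorem \ref{MMax}, whose only hypothesis (beyond $\mathcal{A}$ being a nonsingular $M$-tensor) is that the set
\[
\mathcal{S}_l = \{x \in \mathbb{R}_+ \mid \mathcal{A}x^{m-1} \le b\}
\]
is nonempty. Since Theorem \ref{MMax} already delivers the existence of a maximal nonnegative solution whenever $\mathcal{S}_l \ne \emptyset$, there is nothing further to establish once this nonemptiness is verified.

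First I would observe that $x = 0$ is a member of $\mathcal{S}_l$: it lies in $\mathbb{R}_+$ by definition, and evaluating the tensor action at the zero vector gives $\mathcal{A}\,0^{m-1} = 0$ (every summand in $({\mathcal A}\,0^{m-1})_i = \sum a_{ii_2\ldots i_m} 0_{i_2}\cdots 0_{i_m}$ vanishes), which satisfies $0 \le b$ by the hypothesis $b \ge 0$. Hence $\mathcal{S}_l \ne \emptyset$, and Theorem \ref{MMax} applies directly to produce the maximal nonnegative solution.

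There is no real obstacle in this corollary; the substantive work has already been done in Theorem \ref{MMax}. As a bonus, the construction of $x_0$ in that theorem's proof is inherited here, so the maximal nonnegative solution can be computed by the Jacobi iteration \eqref{eqJ} starting from any $x_0 \ge 0$ with $\mathcal{A} x_0^{m-1} \ge b$, for instance the unique positive solution of $\mathcal{A} x^{m-1} = \hat{b}$ for any $\hat{b} > 0$ with $\hat{b} \ge b$.
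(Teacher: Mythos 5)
Your proof is correct and is exactly the (implicit) argument the paper has in mind: since $b\ge 0$, the zero vector satisfies $\mathcal{A}\,0^{m-1}=0\le b$, so $\mathcal{S}_l\ne\emptyset$ and Theorem~\ref{MMax} applies. The paper states this corollary without proof precisely because the reduction is this immediate, and your bonus remark about the starting point for the Jacobi iteration matches the paper's own remark following Theorem~\ref{MMax}.
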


\begin{remark}
When ${\mathcal A}$ is a nonsingular $M$-tensor and $b\ge 0$, we can use Jacobi iteration with 
$x_0=0$ to get the minimal solution, and use $x_0>0$ with ${\mathcal A}x_0^{m-1}>0$  and ${\mathcal A}x_0^{m-1}\ge b$  to get the maximal solution. 
When $b>0$, we can use either of these two choices to get the unique positive solution. 
For the case $b>0$, the Jacobi iteration has been studied in \cite{DW16} with $x_0>0$ satisfying 
$0<{\mathcal A}x_0^{m-1}\le b$. More general tensor splitting methods have been studied in \cite{LLV18}, again with this requirement on $x_0$ (see \cite[Theorem 5.4]{LLV18}). 
It is interesting to note that both our choices are excluded by this requirement (unless $x_0$ is already the solution). 
\end{remark}

\begin{cor}\label{corcomp}
Let ${\mathcal A}$ be a nonsingular $M$-tensor. 
Suppose that  
$$
{\mathcal S}_l=\{x\in {\mathbb R}_+ \ | \   {\mathcal A}x^{m-1}\le b\}\ne \emptyset
$$
 and let $x_{\max}({\mathcal A}, b)$ be the maximal nonnegative solution of 
${\mathcal A}x^{m-1}=b$. 
If any element of $b$ increases, or if any entry of ${\mathcal A}$ decreases, then the new equation 
$\tilde{{\mathcal A}}x^{m-1}=\tilde{b}$ also has a maximal nonnegative solution $x_{\max}(\tilde{{\mathcal A}}, \tilde{b})$ 
provided that $\tilde{{\mathcal A}}$ is still a nonsingular $M$-tensor. 
Moreover, $x_{\max}(\tilde{{\mathcal A}}, \tilde{b})\ge x_{\max}({\mathcal A}, b)$. 
\end{cor}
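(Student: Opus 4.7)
The plan is to mirror the short argument used for the analogous corollary about minimal solutions, exploiting Theorem \ref{MMax} as a black box. The central observation will be that each of the allowed perturbations \emph{enlarges} the feasible set $\mathcal{S}_l$, so the existence of a maximal nonnegative solution for the perturbed equation is immediate from Theorem \ref{MMax}, and the comparison inequality falls out for free.

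First I would set $\tilde{\mathcal{S}}_l = \{x\in\mathbb{R}_+ \ | \ \tilde{\mathcal{A}}x^{m-1}\le \tilde{b}\}$ and verify the inclusion $\mathcal{S}_l \subseteq \tilde{\mathcal{S}}_l$. Take any $x\in \mathcal{S}_l$, so $x\ge 0$ and $\mathcal{A}x^{m-1}\le b$. If some entries of $b$ increase (with $\tilde{\mathcal{A}}=\mathcal{A}$), then $\tilde{\mathcal{A}}x^{m-1}=\mathcal{A}x^{m-1}\le b\le \tilde{b}$. If instead some entries of $\mathcal{A}$ decrease (with $\tilde{b}=b$), then because $x\ge 0$ every monomial $x_{i_2}\cdots x_{i_m}$ appearing in $(\tilde{\mathcal{A}}x^{m-1})_i$ is nonnegative, whence $\tilde{\mathcal{A}}x^{m-1}\le \mathcal{A}x^{m-1}\le b=\tilde{b}$. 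In either case $x\in \tilde{\mathcal{S}}_l$, and of course the general perturbation (both changes at once) is handled by combining the two estimates.

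With the inclusion in hand, $\tilde{\mathcal{S}}_l$ is nonempty (it contains $x_{\max}(\mathcal{A},b)$ in particular), and by hypothesis $\tilde{\mathcal{A}}$ is a nonsingular $M$-tensor. Theorem \ref{MMax} then yields a maximal element $x_{\max}(\tilde{\mathcal{A}},\tilde{b})$ of $\tilde{\mathcal{S}}_l$, which is simultaneously the maximal nonnegative solution of $\tilde{\mathcal{A}}x^{m-1}=\tilde{b}$. Since $x_{\max}(\mathcal{A},b)\in \mathcal{S}_l\subseteq \tilde{\mathcal{S}}_l$ and $x_{\max}(\tilde{\mathcal{A}},\tilde{b})$ dominates every element of $\tilde{\mathcal{S}}_l$, we obtain $x_{\max}(\tilde{\mathcal{A}},\tilde{b})\ge x_{\max}(\mathcal{A},b)$.

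There is no real obstacle here: the only place that requires a moment of care is the monotonicity $\tilde{\mathcal{A}}x^{m-1}\le \mathcal{A}x^{m-1}$ under entrywise decrease of $\mathcal{A}$, and this is valid precisely because we restrict to $x\ge 0$. Unlike the minimal-solution corollary, we do not need to separate diagonal and off-diagonal decreases, because the hypothesis ``$\tilde{\mathcal{A}}$ is still a nonsingular $M$-tensor'' already absorbs whatever structural constraint (off-diagonals nonpositive, spectral radius condition) is needed.
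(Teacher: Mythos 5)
Your proof is correct and takes essentially the same approach as the paper: establish the inclusion $\mathcal{S}_l \subseteq \tilde{\mathcal{S}}_l$ (you spell out the termwise monotonicity that the paper leaves implicit, and you also note why no sign bookkeeping is needed for the entry decreases) and then invoke Theorem \ref{MMax}. Incidentally, the paper's own two-line proof contains a typo, writing $\mathcal{S}_g \subseteq \tilde{\mathcal{S}}_g$ where $\mathcal{S}_l \subseteq \tilde{\mathcal{S}}_l$ is meant; your version corrects this.
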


\begin{proof}
Let $\tilde{{\mathcal S}}_l=\{x\in {\mathbb R}_+ \ | \   \tilde{{\mathcal A}}x^{m-1}\le \tilde{b}\}$. 
Then  ${\mathcal S}_g\subseteq \tilde{{\mathcal S}}_g$ and the conclusions follow immediately.  
\end{proof}

The next result partially explain why the maximal nonnegative solution  is of particular interest for equation \eqref{Teq} with $b\ge 0$. 
\begin{cor}
Let ${\mathcal A}$ be a nonsingular $M$-tensor and $b\ge 0$. Suppose $x^{(k)}$ is the unique positive solution of 
${\mathcal A}x^{m-1}=b^{(k)}$, where $b^{(k)}>0$ ($k=1, 2, \ldots$) and $b^{(k)}$ is monotonically decreasing and converges to $b$ as $k\to \infty$. 
Then, as $k\to \infty$, $x^{(k)}$ converges to the maximal nonnegative solution $x_{\max}({\mathcal A}, b)$ of  ${\mathcal A}x^{m-1}=b$. 
\end{cor}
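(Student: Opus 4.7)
The plan is to combine the monotonicity result of Corollary \ref{corcomp} with the fact that, for a positive right side, the unique positive solution is automatically the maximal nonnegative solution. First I would observe that, by the corollary stating that for $b>0$ the unique positive solution is the only nonnegative solution, we have $x^{(k)}=x_{\max}(\mathcal{A},b^{(k)})$ for each $k$. This identification is what makes Corollary \ref{corcomp} applicable to the sequence $\{x^{(k)}\}$.

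Next I would extract two monotonicity facts. Since $b^{(k+1)}\le b^{(k)}$, Corollary \ref{corcomp} gives $x^{(k+1)}\le x^{(k)}$, so $\{x^{(k)}\}$ is componentwise nonincreasing. Since $b^{(k)}\ge b$ and $x_{\max}(\mathcal{A},b)$ exists (by the corollary following Theorem \ref{MMax}), Corollary \ref{corcomp} also gives $x^{(k)}\ge x_{\max}(\mathcal{A},b)$ for every $k$. Therefore $\{x^{(k)}\}$ is monotonically decreasing and bounded below, so the limit $x_*=\lim_{k\to\infty}x^{(k)}$ exists in $\mathbb{R}^n_+$ and satisfies $x_*\ge x_{\max}(\mathcal{A},b)$.

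Then I would pass to the limit. Because the map $x\mapsto \mathcal{A}x^{m-1}$ is continuous, letting $k\to\infty$ in $\mathcal{A}(x^{(k)})^{m-1}=b^{(k)}$ yields $\mathcal{A}x_*^{m-1}=b$. Hence $x_*$ is a nonnegative solution of \eqref{Teq} with right side $b$, and by the maximality of $x_{\max}(\mathcal{A},b)$ we obtain $x_*\le x_{\max}(\mathcal{A},b)$. Combined with the reverse inequality from the previous step, this forces $x_*=x_{\max}(\mathcal{A},b)$, proving the claim.

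I do not expect a serious obstacle; the only point that needs a little care is the justification that $x^{(k)}$ may be identified with $x_{\max}(\mathcal{A},b^{(k)})$ so that Corollary \ref{corcomp} genuinely applies, but this is immediate from the uniqueness corollary for $b>0$. Everything else is a standard monotone-convergence-plus-continuity argument.
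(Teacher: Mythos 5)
Your proof is correct and follows essentially the same route as the paper: invoke Corollary \ref{corcomp} to get the chain $x^{(k)}\ge x^{(k+1)}\ge x_{\max}(\mathcal{A},b)$, take the monotone limit, pass to the limit in the equation by continuity, and conclude by maximality. The extra sentence identifying $x^{(k)}$ with $x_{\max}(\mathcal{A},b^{(k)})$ via the uniqueness corollary for $b>0$ is a small point of added rigor the paper leaves implicit, but it does not change the argument.
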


\begin{proof} 
By Corollary \ref{corcomp}, $x^{(k)} \ge x^{(k+1)}\ge x_{\max}({\mathcal A}, b)$ for all $k\ge 0$, 
so $\lim_{k\to \infty} x^{(k)}=x_*$ exists. 
Letting  $k \to \infty$ in ${\mathcal A}(x^{(k)})^{m-1}=b^{(k)}$, we get ${\mathcal A}x_*^{m-1}=b$ and 
$x_*\ge x_{\max}({\mathcal A}, b)$. Thus $x_*= x_{\max}({\mathcal A}, b)$.  
\end{proof}

\section{Iterative methods for extremal nonnegative solutions}
\label{sec3}

In the previous section, we have presented new proofs for the existence of extremal nonnegative solutions of the tensor equation 
${\mathcal A}x^{m-1}=b$, where ${\mathcal A}$ is a nonsingular $M$-tensor and $b$ is a nonnegative vector, by using the Jacobi iteration with suitable initial guesses. 
We have also presented some results when ${\mathcal A}$ is a $Z$-tensor or $b$ is a general real vector. 
Now, we would like to present some iterative methods that may be more efficient than the Jacobi iteration for actual computation of the extremal solutions, determine and compare the rates of convergence of these methods. 

The Jacobi iteration is just a very simple fixed-point iteration. In the Jacobi iteration, we keep the term involving $x_i^{m-1}$ in the $i$th equation
($i\in [n]$) on the left and move all other terms to the right. But in the $i$th equation, we may also have terms $x_j^{m-1}$ with $j\ne i$. 
We may consider keeping all unmixed terms ($x_1^{m-1}, \ldots, x_n^{m-1}$) on the left, regardless which equation they are from. 
In other words, we have a splitting of the tensor ${\mathcal A}$: ${\mathcal A}={\mathcal M}-{\mathcal N}$, where ${\mathcal M}=(m_{i_1i_2\ldots i_m})$ with 
$m_{ij\ldots j}=a_{ij\ldots j}$  for $i, j \in [n]$ and $m_{i_1i_2\ldots i_m} =0$ elsewhere. We may call this splitting a level-1 splitting. 
If we keep all unmixed terms on the left, we then have the equation 
$$
{\mathcal M}x^{m-1}={\mathcal N}x^{m-1}+b. 
$$
Note that  ${\mathcal M}x^{m-1}=Mx^{[m-1]}$, where the $n\times n$ matrix $M$ has $(i,j)$ element $a_{ij...j}$  for $i, j \in [n]$, 
and is called \cite{P10} the majorization matrix associated with ${\mathcal A}$. 
It is easy to see \cite{LLV18} that $M$  is a nonsingular $M$-matrix when ${\mathcal  A}$  is a nonsingular $M$-tensor. 
When ${\mathcal A}$ is a $Z$-tensor, $M$ is obviously a $Z$-matrix. 

We then have a splitting of the matrix $M$: $M=P-Q$, where $P$ is a nonsingular $M$-matrix and $Q\ge 0$. This may be called a level-2 splitting. 
For example, the following splitting of a $Z$-matrix is permitted, although not a good one. 
$$
\left [\begin{array}{ccc}
-1 & 0 & 0\\
-3 & 2 & -2\\
0 & -3 & 4
\end{array} \right ]=\left [\begin{array}{ccc}
2 & 0 & 0\\
-1 & 3 & -1\\
0 & -2 & 5
\end{array} \right ]-\left [\begin{array}{ccc}
3 & 0 & 0\\
2 & 1 & 1\\
0 & 1& 1
\end{array} \right ]. 
$$
We can now rewrite ${\mathcal A}x^{m-1}=b$ as
$$
Px^{[m-1]}=Qx^{[m-1]}+{\mathcal N}x^{m-1}+b
$$
and get fixed-point iteration in implicit form
\begin{equation}\label{eqM}
Px_{k+1}^{[m-1]}=Qx_k^{[m-1]}+{\mathcal N}x_k^{m-1}+b
\end{equation}
or in explicit form
$$
x_{k+1}=\left (P^{-1}\left (Qx_k^{[m-1]}+{\mathcal N}x_k^{m-1}+b\right )\right )^{[1/(m-1)]}, 
$$
assuming  $P^{-1}\left (Qx_k^{[m-1]}+{\mathcal N}x_k^{m-1}+b\right )\ge 0$ for each $k$. 
In \cite{LLV18}, fixed-point iterations like this are called tensor splitting iterative methods and studied for finding the unique positive solution of the tensor equation  ${\mathcal A}x^{m-1}=b$ with ${\mathcal A}$ being a nonsingular $M$-tensor and $b$ a positive vector. 

We first study iteration \eqref{eqM} for finding the minimal nonnegative solution. 

\begin{thm}\label{thmminM}
Let ${\mathcal A}$ be a $Z$-tensor and $b\in {\mathbb R}_+$. Suppose that  $$
{\mathcal S}_g=\{x\in {\mathbb R}_+ \ | \   {\mathcal A}x^{m-1}\ge b\}\ne \emptyset. 
$$ 
Then for $x_0=0$, the sequence $\{x_k\}$ from iteration \eqref{eqM} is monotonically increasing and converges to the minimal nonnegative solution of 
${\mathcal A}x^{m-1}=b$. 
\end{thm}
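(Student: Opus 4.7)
The plan is to follow essentially the same monotone-convergence template used in the proof of Theorem \ref{thmmin}, with the diagonal tensor ${\mathcal D}$ replaced by the two-level splitting ${\mathcal A}={\mathcal M}-{\mathcal N}=(P-Q){\mathcal I}\cdot \text{(unmixed part)}-{\mathcal N}$. The first thing to verify is the sign structure: because ${\mathcal A}$ is a $Z$-tensor and ${\mathcal M}$ keeps only the unmixed entries $a_{ij\ldots j}$, the tensor ${\mathcal N}={\mathcal M}-{\mathcal A}$ has ${\mathcal N}\ge 0$; by assumption $Q\ge 0$ and $P$ is a nonsingular $M$-matrix, hence $P^{-1}\ge 0$. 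Together these facts will make every step of the iteration \eqref{eqM} preserve the componentwise order on ${\mathbb R}_+^n$.

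Next I would prove, by induction on $k$, that $0=x_0\le x_1\le x_2\le\cdots$. The base case is immediate since $Px_1^{[m-1]}=b\ge 0$ forces $x_1^{[m-1]}=P^{-1}b\ge 0$, so $x_1\ge 0=x_0$. For the inductive step, assuming $x_{k-1}\le x_k$, nonnegativity of $x_{k-1},x_k$ and of $Q,{\mathcal N}$ gives
\[
Px_k^{[m-1]}=Qx_{k-1}^{[m-1]}+{\mathcal N}x_{k-1}^{m-1}+b \le Qx_k^{[m-1]}+{\mathcal N}x_k^{m-1}+b = Px_{k+1}^{[m-1]},
\]
and applying $P^{-1}\ge 0$ followed by the componentwise $(1/(m-1))$-th power yields $x_k\le x_{k+1}$.

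Then I would show that the sequence is bounded above by every $x\in{\mathcal S}_g$. Any such $x$ satisfies ${\mathcal A}x^{m-1}\ge b$, which rewrites as $Px^{[m-1]}\ge Qx^{[m-1]}+{\mathcal N}x^{m-1}+b$. Induction again: if $x_k\le x$, then
\[
Px_{k+1}^{[m-1]}=Qx_k^{[m-1]}+{\mathcal N}x_k^{m-1}+b \le Qx^{[m-1]}+{\mathcal N}x^{m-1}+b \le Px^{[m-1]},
\]
so $x_{k+1}\le x$. Monotonicity plus boundedness gives a limit $x_*\le x$ for every $x\in{\mathcal S}_g$; taking $k\to\infty$ in \eqref{eqM} shows $Px_*^{[m-1]}=Qx_*^{[m-1]}+{\mathcal N}x_*^{m-1}+b$, i.e. ${\mathcal A}x_*^{m-1}=b$. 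Since every nonnegative solution belongs to ${\mathcal S}_g$, $x_*$ is the minimal nonnegative solution, which is exactly the element produced by Theorem \ref{thmmin}.

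I do not expect any serious obstacle: the argument is a direct transcription of the Jacobi proof, where the only nontrivial ingredients are ${\mathcal N}\ge 0$, $Q\ge 0$, and $P^{-1}\ge 0$, all of which are already recorded in the paragraphs preceding the theorem. The one point worth being explicit about is that because all iterates lie in ${\mathbb R}_+^n$, inequalities between vectors transfer to inequalities between their $[m-1]$-th powers and back via the $[1/(m-1)]$-th power, so the implicit form \eqref{eqM} may be used throughout without ever invoking the explicit form.
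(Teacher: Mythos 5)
Your proof is correct and follows the same monotone fixed-point template that the paper uses: the paper's own argument simply refers back to the proof of Theorem \ref{thmmin} and notes that the one new ingredient is that $P x^{[m-1]}\ge P y^{[m-1]}$ implies $x\ge y$ because $P^{-1}\ge 0$ for a nonsingular $M$-matrix $P$. You have spelled out in full the sign checks (${\mathcal N}\ge 0$, $Q\ge 0$, $P^{-1}\ge 0$) and the two inductions that the paper leaves implicit, which is exactly the intended reasoning.
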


\begin{proof}
The proof is almost the same as the proof of Theorem \ref{thmmin}. In that proof, we use the obvious fact that 
${\mathcal D}x^{m-1}\ge {\mathcal D}y^{m-1}$ (with $x, y\ge 0$) implies $x\ge y$. 
We now need ${P}x^{[m-1]}\ge {P}y^{[m-1]}$  implies $x\ge y$, which is true since $P^{-1}\ge 0$ for the nonsingular $M$-matrix $P$.  
\end{proof}

\begin{remark}
When ${\mathcal A}$ is a $Z$-tensor but not a nonsingular $M$-tensor, it may not be easy to determine whether ${\mathcal S}_g \ne \emptyset$.   
We may apply iteration \eqref{eqM} without checking this condition. In this case, the sequence $\{x_k\}$ is still well-defined and monotonically increasing. 
The sequence is bounded above if and only if ${\mathcal S}_g \ne \emptyset$.  
\end{remark}

In iteration \eqref{eqM}, ${\mathcal N}$ is uniquely determined by ${\mathcal A}$, but we have the freedom to choose $Q$ ($P$ is uniquely determined by $Q$). 
The next result gives a comparison of convergence rate by examining the iterates right from the beginning (so we are not talking about asymptotic rate of convergence here). 

\begin{thm}\label{ratecompm} 
Under the conditions of Theorem \ref{thmminM}, 
let $\{x_k\}$  and $\{\hat{x}_k\}$ be the sequences generated by iteration \eqref{eqM} with two splittings of  $M$: 
$M=P-Q$ and $M=\hat{P}-\hat{Q}$, respectively, and with $x_0= \hat{x}_0=0$. 
If $\hat{Q}\le Q$,  then $x_k\le \hat{x}_k$ for all $k\ge 0$. In other words, a smaller matrix $Q$ gives faster termwise convergence 
for finding the minimal nonnegative  solution. 
\end{thm}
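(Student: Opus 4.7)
The plan is induction on $k$. The base case $x_0 = \hat{x}_0 = 0$ is immediate, so assume $x_k \le \hat{x}_k$ and prove $x_{k+1} \le \hat{x}_{k+1}$.

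First I would exploit the identity $P - \hat{P} = Q - \hat{Q}$ (both splittings equal $M$). Starting from the defining relation
$P x_{k+1}^{[m-1]} = Q x_k^{[m-1]} + \mathcal{N} x_k^{m-1} + b$, I would write $P = \hat{P} + (Q - \hat{Q})$ on the left and $Q = \hat{Q} + (Q - \hat{Q})$ on the right, then rearrange to obtain
$$
\hat{P} x_{k+1}^{[m-1]} = \hat{Q} x_k^{[m-1]} + \mathcal{N} x_k^{m-1} + b + (Q - \hat{Q})\bigl(x_k^{[m-1]} - x_{k+1}^{[m-1]}\bigr).
$$
The correction term is entrywise $\le 0$ because $Q - \hat{Q} \ge 0$ by hypothesis and $x_k \le x_{k+1}$ by the monotonicity established in Theorem \ref{thmminM}. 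Dropping it gives $\hat{P} x_{k+1}^{[m-1]} \le \hat{Q} x_k^{[m-1]} + \mathcal{N} x_k^{m-1} + b$.

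Next, I would apply the inductive hypothesis $x_k \le \hat{x}_k$ together with $\hat{Q} \ge 0$ and $\mathcal{N} \ge 0$ (the latter holds because $\mathcal{A}$ is a $Z$-tensor, so the mixed off-diagonal entries, which are the ones that land in $-\mathcal{N}$, are nonpositive) to conclude
$$
\hat{P} x_{k+1}^{[m-1]} \le \hat{Q} \hat{x}_k^{[m-1]} + \mathcal{N} \hat{x}_k^{m-1} + b = \hat{P} \hat{x}_{k+1}^{[m-1]}.
$$
Multiplying by $\hat{P}^{-1} \ge 0$ (which requires that $\hat{P}$ is a nonsingular $M$-matrix, as built into the splitting) yields $x_{k+1}^{[m-1]} \le \hat{x}_{k+1}^{[m-1]}$, and taking componentwise $(m-1)$th roots of these nonnegative vectors finishes the inductive step.

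The main obstacle is the bookkeeping in the first display: the argument works only because the already-proven monotonicity of $\{x_k\}$ lets the extra term $(Q - \hat{Q})(x_k^{[m-1]} - x_{k+1}^{[m-1]})$ be discarded with the right sign. Beyond that, the proof is a routine chain of order-preserving inequalities using standard nonnegativity properties of nonsingular $M$-matrices and $Z$-tensors.
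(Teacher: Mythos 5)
Your proof is correct and is essentially the mirror image of the paper's argument: the paper rewrites the $\hat{x}$-iteration in terms of $P$, uses monotonicity of $\{\hat{x}_k\}$ to discard the $(Q-\hat{Q})$ term, and finishes with $P^{-1}\ge 0$, whereas you rewrite the $x$-iteration in terms of $\hat{P}$, use monotonicity of $\{x_k\}$, and finish with $\hat{P}^{-1}\ge 0$. Same decomposition, same reliance on the monotonicity from Theorem~\ref{thmminM} and on inverse-positivity of the level-2 splitting matrix; the two presentations are interchangeable.
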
 

\begin{proof} We need to show that $x_k\le \hat{x}_k$ implies $x_{k+1}\le \hat{x}_{k+1}$  for each $k\ge 0$. 
We have \eqref{eqM} and 
\begin{equation}\label{eqhat}
\hat{P}\hat{x}_{k+1}^{[m-1]}=\hat{Q}\hat{x}_k^{[m-1]}+{\mathcal N}\hat{x}_k^{m-1}+b. 
\end{equation}
From $P-Q=\hat{P}-\hat{Q}$, we get $\hat{P}={P}-(Q-\hat{Q})$ and get from \eqref{eqhat} that 
\begin{eqnarray*}
P\hat{x}_{k+1}^{[m-1]}&=&(Q-\hat{Q})\hat{x}_{k+1}^{[m-1]}+\hat{Q}\hat{x}_k^{[m-1]}+{\mathcal N}\hat{x}_k^{m-1}+b\\
&\ge&(Q-\hat{Q})\hat{x}_{k}^{[m-1]}+\hat{Q}\hat{x}_k^{[m-1]}+{\mathcal N}\hat{x}_k^{m-1}+b\\
&=& Q\hat{x}_{k}^{[m-1]}+{\mathcal N}\hat{x}_k^{m-1}+b\\
&\ge & Q x_{k}^{[m-1]}+{\mathcal N}{x}_k^{m-1}+b\\
&=&Px_{k+1}^{[m-1]}. 
\end{eqnarray*}
Thus  $x_{k+1}\le \hat{x}_{k+1}$.  
\end{proof}

We now show that iteration \eqref{eqM} has linear convergence under suitable assumptions. By linear convergence we actually mean at least linear convergence. For example, $x_0=0$ is already a solution when $b=0$. Let ${\mathcal L}$ be the nonnegative tensor such that ${\mathcal L}x^{m-1} = Qx^{[m-1]}+{\mathcal N}x^{m-1}$. 
Then iteration \eqref{eqM} becomes 
\begin{equation}\label{eqL}
Px_{k+1}^{[m-1]}={\mathcal L}x_k^{m-1}+b.
\end{equation}

For $x\in {\mathbb R}^n$ and index set $I\subseteq  [n]$, we denote by $x_I$ the subvector of $x$, whose
elements are $x_i$, $i\in I$.  For tensor ${\mathcal A} = (a_{i_1\ldots i_m}) \in {\mathbb R}^{[m, n]}$, we denote by ${\mathcal A}_I$  the 
subtensor of ${\mathcal A}$  with elements $a_{i_1\ldots i_m}$,  $i_1, \ldots, i_m\in  I$. 

\begin{thm}\label{rateMin}
Let ${\mathcal A}$ be a $Z$-tensor and $b\in {\mathbb R}_+$. Let $I_0=\{i \ | \ b_i=0\}$ 
and suppose that  ${\mathcal S}_g=\{x\in {\mathbb R}_+ \ | \   {\mathcal A}x^{m-1}\ge b\}\ne \emptyset$.  
Let $\{x_k\}$ be  the sequence  from iteration \eqref{eqL} with $x_0=0$. 
\begin{enumerate}
\item If $I_0=[n]$, then $x_0=0$ is already the minimal nonnegative solution. 
\item If $I_0=\emptyset$, then $\{x_k\}$ converges linearly to the minimal nonnegative solution 
(which is actually the unique positive solution) of ${\mathcal A}x^{m-1}=b$. 
\item If $I_0$ is a proper subset of $[n]$ and $k_0$ is the smallest integer such that $x_{k_0}$ and $x_{k_0+1}$ have the same zero pattern, then $1\le k_0\le n$ and $x_k$ have the same zero pattern for all $k\ge k_0$, which is also the zero pattern of the minimal nonnegative solution $x_{\min}$. Let $I=\{i \ | \ (x_{k_0})_i=0\}$. Then $I \subseteq I_0$. Let $I_c=[n]\setminus I$.  
Then the iteration \eqref{eqL} for $k\ge k_0$ is reduced to an iteration for the lower-dimensional tensor equation:  
\begin{equation}\label{eqred}
{\mathcal A}_{I_c}\hat{x}^{m-1}=b_{I_c}. 
\end{equation}
For $k\ge k_0$, $\hat{x}_k$ from the reduced iteration is the same as $(x_{k})_{I_c}$.
The minimal nonnegative solution $\hat{x}_{\min}$ of \eqref{eqred} is positive and is the same as $(x_{\min})_{I_c}$. 
Thus  $x_{k}$ converges to $x_{\min}$ linearly if and only if $\hat{x}_{k}$ converges to $\hat{x}_{\min}$ linearly. 
Assume ${\mathcal A}{x}^{m-1}=b$ is already the reduced equation for notation convenience (all diagonal elements of ${\mathcal A}$ are now positive). Then $x_{k}$ converges to $x_{\min}$ linearly under any of the following four conditions: 
\begin{enumerate}
\item $P^{-1}b>0$.
\item $P^{-1}\bar{\mathcal L} e^{m-2}$ is irreducible, where $e$ is the vector of ones and $\bar{\mathcal L}$ is the semi-symmetric tensor from ${\mathcal L}$. 
\item With all diagonal elements of ${\mathcal A}$ included entirely in $P$ and ${\mathcal B}={\mathcal D}-{\mathcal A}$ (${\mathcal D}$ is the diagonal part of ${\mathcal A}$), the 
matrix $\bar{\mathcal B} e^{m-2}$ is strictly upper or lower triangular.
\item For each $i\in I_0$, there is an element $a_{ii_2\ldots i_m}\ne 0$, where $i_j\notin I_0$ for at least one $j$ ($2\le j\le m$). 
\end{enumerate}
\end{enumerate}
\end{thm}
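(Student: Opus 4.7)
The plan is to handle the three cases of the theorem in turn. Case~(1) is immediate: $b=0$ makes $x_0=0$ a fixed point of iteration~\eqref{eqL}, and by Theorem~\ref{thmminM} it is the minimal nonnegative solution. In case~(2), $b>0$ gives $P^{-1}b>0$ (the nonsingular $M$-matrix $P$ has $P^{-1}\ge 0$ with positive diagonal, so no row of $P^{-1}$ can be identically zero), hence $x_1>0$; the zero-pattern stabilization of case~(3) is vacuous, and linear convergence follows from the Jacobian analysis described below, with condition~(a) automatically in force. For case~(3), the first subtask is zero-pattern stabilization. Since $\{x_k\}$ is monotonically increasing by Theorem~\ref{thmminM}, the sets $I_k:=\{i\ |\ (x_k)_i=0\}$ form a non-increasing chain of subsets of $[n]$, hence stabilize within at most $n$ steps at some $I=\{i\ |\ (x_{\min})_i=0\}$. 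To obtain $I\subseteq I_0$, fix $i\in I$: the $i$-th component of $Px_{\min}^{[m-1]}={\mathcal L}x_{\min}^{m-1}+b$ reads
$$
\sum_{j\ne i}P_{ij}(x_{\min})_j^{m-1}=({\mathcal L}x_{\min}^{m-1})_i+b_i,
$$
with left side $\le 0$ (since $P_{ij}\le 0$ for $j\ne i$) and right side $\ge 0$, so both vanish and $b_i=0$. A short further bookkeeping shows that iteration~\eqref{eqL} for $k\ge k_0$ decouples to the analogous iteration for the lower-dimensional equation~\eqref{eqred} on $I_c$, with $\hat{x}_{\min}=(x_{\min})_{I_c}>0$ by construction.

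For the linear-convergence assertion under any of~(a)--(d), I would rewrite the reduced iteration as $x_{k+1}=\Phi(x_k)$ with
$$
\Phi(x)=\bigl(P^{-1}({\mathcal L}x^{m-1}+b)\bigr)^{[1/(m-1)]},
$$
and compute its Jacobian at the positive fixed point $x_*:=\hat{x}_{\min}$ using $({\mathcal L}x^{m-1})'=(m-1)\bar{\mathcal L}x^{m-2}$ and the derivative of $y\mapsto y^{[1/(m-1)]}$. The result is
$$
A:=\Phi'(x_*)=D_*^{-1}P^{-1}T_*,\qquad D_*=\operatorname{diag}(x_*^{[m-2]}),\qquad T_*=\bar{\mathcal L}x_*^{m-2}.
$$
All factors are nonnegative with $D_*>0$, so $A\ge 0$. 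The local-convergence theorem for differentiable fixed-point iterations then yields linear convergence at rate $\rho(A)$ provided $\rho(A)<1$; the monotone convergence already established ensures the sequence eventually enters any neighborhood of $x_*$, so the local rate becomes the rate of the full sequence. The fixed-point identity $Px_*^{[m-1]}={\mathcal L}x_*^{m-1}+b$ rewrites as $(I-A)x_*=y$ with $y:=D_*^{-1}P^{-1}b\ge 0$ and $x_*>0$; this is the starting point for all four subcases.

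For each of~(a)--(d) the template is a left-Perron-eigenvector argument: if $\rho(A)=1$ and $v\ge 0$, $v\ne 0$ satisfies $v^TA=v^T$, then $v^T(I-A)x_*=0$ forces $v^Ty=0$, and one seeks a contradiction with the joint structure of $v$ and $y$. Under~(a), $y>0$ makes $v^Ty>0$ for any nonzero $v\ge 0$. Under~(b), the zero pattern of $A$ equals that of $P^{-1}\bar{\mathcal L}e^{m-2}$ (the diagonal scalings are benign, and $x_*>0$ makes $\bar{\mathcal L}x_*^{m-2}$ and $\bar{\mathcal L}e^{m-2}$ co-supported), so $A$ is irreducible; its left Perron eigenvector is strictly positive, and $y\ne 0$ (since $b\ne 0$ in the reduced equation, else $\hat{x}_{\min}=0$) yields the contradiction. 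Under~(c), with $P$ the diagonal of ${\mathcal A}$, the equality ${\mathcal L}x^{m-1}={\mathcal B}x^{m-1}$ of multilinear forms (both collect the off-diagonal contribution of ${\mathcal A}$) gives $\bar{\mathcal L}=\bar{\mathcal B}$ by uniqueness of the semi-symmetric representative, so $T_*$ inherits the strictly triangular zero pattern; since $P^{-1}$ and $D_*^{-1}$ are diagonal, $A$ is strictly triangular and nilpotent, giving $\rho(A)=0$. Under~(d), the formula that $(\bar{\mathcal L}e^{m-2})_{ij}>0$ iff ${\mathcal L}$ has a nonzero entry in row $i$ with $j$ appearing among its $m-1$ trailing indices translates the combinatorial hypothesis into: for every $i\in I_0$ there is $j\notin I_0$ with $A_{ij}>0$. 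Then $v^Ty=0$ forces $\operatorname{supp}(v)\subseteq I_0$, while $v_j=\sum_i v_iA_{ij}=0$ for any $j\notin\operatorname{supp}(v)$ (in particular $j\notin I_0$) forces $A_{ij}=0$ for all $i\in\operatorname{supp}(v)$, contradicting the graph property.

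The main obstacle will be the bookkeeping in case~(c), where one must verify that ${\mathcal L}$ and ${\mathcal B}$ indeed define the same multilinear form under the stated splitting, despite being assembled differently (${\mathcal L}$ from the off-diagonal matrix $Q=M-D$ plus the mixed-index tensor ${\mathcal N}$, versus ${\mathcal B}={\mathcal D}-{\mathcal A}$ collecting all off-diagonal entries of ${\mathcal A}$ into a single tensor). A secondary delicate point in case~(d) is that when the nonzero entry $a_{ii_2\ldots i_m}$ promised by the hypothesis is an unmixed off-diagonal $a_{ij\ldots j}$, its contribution to $\bar{\mathcal L}$ depends on the level-2 splitting; the statement implicitly requires a splitting (such as Jacobi, $Q=M-D$) in which such off-diagonal entries of $M$ contribute to $Q$.
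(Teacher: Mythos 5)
Your proof is essentially correct and tracks the paper's structure (stabilizing zero pattern, reduction to a lower-dimensional equation with positive minimal solution, differentiation of the fixed-point map at $\bar{x}=x_{\min}$, then $\rho(\phi'(\bar{x}))<1$), but it takes a genuinely different route on the key spectral step. The paper argues on the \emph{right}: from $\phi'(\bar{x})\bar{x}=\bar{x}-\mathrm{diag}(\bar{x}^{[-(m-2)]})P^{-1}b\le\bar{x}$ it applies the Collatz--Wielandt bound, and under (d) it constructs a perturbed positive vector $\hat{x}$ with $\phi'(\bar{x})\hat{x}<\hat{x}$ by decreasing $\bar{x}_i$ slightly on $i\notin I_0$. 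You argue on the \emph{left}: take a nonnegative left eigenvector $v$ of $A=\phi'(\bar{x})$ for $\rho(A)=1$, deduce $v^Ty=0$ hence $\mathrm{supp}(v)\subseteq I_0$, and derive a combinatorial contradiction from the graph of $A$. Both are standard Perron--Frobenius, but your dual formulation unifies (a), (b), (d) under one template, which is cleaner. Your argument for $I\subseteq I_0$ (reading off $b_i=0$ from the $i$-th row of $Px_{\min}^{[m-1]}={\mathcal L}x_{\min}^{m-1}+b$ using $P_{ij}\le0$) also differs from the paper's (which uses $(x_1)_i>0$ when $b_i>0$) and is equally valid. Two things you should tighten up. First, you never state why $\rho(A)\le1$ in the first place; it follows from $Ax_*\le x_*$ with $x_*>0$, and should be said before the $\rho(A)=1$ template can be invoked. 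Second, and more importantly, the concern you raise at the end about case~(d) --- that an unmixed off-diagonal entry $a_{ij\ldots j}$ may land in $P$ rather than ${\mathcal L}$ --- is not a hidden hypothesis on the splitting. The theorem is stated for an arbitrary level-2 splitting, and the resolution is Theorem~\ref{ratecompm}: a smaller $Q$ yields larger iterates $x_k$ squeezed between the Jacobi iterates and $x_{\min}$, so it suffices to prove linear convergence for $P$ diagonal (Jacobi), after which every off-diagonal contribution of ${\mathcal A}$ does live in ${\mathcal L}={\mathcal B}$. The paper makes this reduction explicit (``By Theorem~\ref{ratecompm}, we may assume $P$ is a diagonal matrix\ldots''); you should do the same, for both (c) and (d), rather than presenting it as a constraint on the statement.
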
 

\begin{proof} 
Conclusion 1 in the theorem is obvious. The proof of conclusion 2 is the same as the proof of conclusion 3 for the reduced equation under condition (a).  
We now prove conclusion 3. 

With $x_0=0$ for iteration \eqref{eqL}, we have $x_k\le x_{k+1}$ for each $k\ge 0$. Let $k_0$ be the smallest integer such that $x_{k_0}$ and $x_{k_0+1}$ have the same zero pattern. It is clear that $1\le k_0\le n$ and that the zero pattern will not change afterwards and the minimal solution $x_{\min}$  has the same zero pattern. Let $I=\{i \ | \ (x_{k_0})_i=0\}$. Since $b_i>0$ implies 
$(x_{k_0})_i\ge (x_1)_i>0$ for a diagonal $P$, we have $I \subseteq  I_0$. For $k\ge k_0$, we have $(x_k)_I=0$. 
Let $I_c=[n]\setminus I$.   
Since $(x_k)_{i_2}(x_k)_{i_3}\cdots (x_k)_{i_m}=0$ for all $k\ge k_0$ when $i_j\in I$ for at least one $j$ 
($j=2, \ldots, m$), we only need the elements in ${\mathcal A}_{I_c}$ to continue the iteration \eqref{eqL} for $k\ge k_0$. 
In other words, the iteration \eqref{eqL} for $k\ge k_0$ is reduced to an iteration for the lower-dimensional tensor equation with indices running through $I_c$ only:  
\begin{equation}\label{eqred2}
{\mathcal A}_{I_c}\hat{x}^{m-1}=b_{I_c}, 
\end{equation}
with $\hat{x}_{k_0}$ obtained from $x_{k_0}$ by deleting all zero elements. 
Note that the level-1 and level-2 splittings of ${\mathcal A}_{I_c}$ are the ones inherited from those for ${\mathcal A}$.
The minimal nonnegative solution $\hat{x}_{\min}$ of \eqref{eqred2} is positive (obtained from $x_{\min}$ by deleting all zero elements), 
but $b_{I_c}$ will have some zero elements when $I\ne I_0$. 
For $k\ge k_0$, $\hat{x}_k$ from the reduced iteration can also be obtained from $x_{k}$ by deleting all zero elements. 
Thus  $x_{k}$ converges to $x_{\min}$ linearly if and only if $\hat{x}_{k}$ converges to $\hat{x}_{\min}$ linearly. 
We now assume that ${\mathcal A}{x}^{m-1}=b$ is already the reduced equation for notation convenience, and denote its positive minimal solution by $\bar{x}$ for short. Note that all diagonal elements of the reduced tensor are positive. 

By Theorem \ref{ratecompm}, we may assume $P$ is a diagonal matrix (where slower convergence happens). 
So we will assume $P$ is diagonal when needed.

The iteration \eqref{eqL} can be written explicitly as $x_{k+1}=\phi(x_k)$, with 
$$
\phi(x)=\left (P^{-1}({\mathcal L}x^{m-1}+b)\right )^{[1/(m-1)]}. 
$$
From 
$$
P\phi(x)^{[m-1]}={\mathcal L}x^{m-1}+b=\bar{\mathcal L}x^{m-1}+b, 
$$
where $\bar{\mathcal L}$ is the semi-symmetric tensor obtained from ${\mathcal L}$, we 
take derivative on both sides to obtain 
$$
P (m-1){\rm diag} (\phi(x)^{[m-2]})\phi'(x)=(m-1)\bar{\mathcal L}x^{m-2}, 
$$
where for $x\in {\mathbb R}^n$, ${\rm diag} (x)$ is the diagonal matrix with the elements of $x$ on the diagonal. 
We need to show $\rho(\phi'(\bar{x}))<1$ for 
\begin{equation}\label{phiex}
\phi'(\bar{x})= {\rm diag} (\bar{x}^{[-(m-2)]})P^{-1}\bar{\mathcal L}{\bar{x}}^{m-2}.
\end{equation} 
Note that the nonnegative matrix $\phi'(\bar{x})$ is such that 
\begin{eqnarray*}
\phi'(\bar{x})\bar{x}&=& {\rm diag} (\bar{x}^{[-(m-2)]})P^{-1}\bar{\mathcal L}{\bar{x}}^{m-2}\bar{x}\\
&=&{\rm diag} (\bar{x}^{[-(m-2)]})P^{-1}\bar{\mathcal L}{\bar{x}}^{m-1}\\
&=&{\rm diag} (\bar{x}^{[-(m-2)]})P^{-1}{\mathcal L}{\bar{x}}^{m-1}\\
&=&{\rm diag} (\bar{x}^{[-(m-2)]})P^{-1}(P{\bar{x}}^{[m-1]}
-{\mathcal A}{\bar{x}}^{m-1})\\
&=&\bar{x}-{\rm diag} (\bar{x}^{[-(m-2)]})P^{-1}b.
\end{eqnarray*}
If $P^{-1}b>0$, then we have $\phi'(\bar{x})\bar{x}<\bar{x}$ and thus \cite{BP94} $\rho(\phi'(\bar{x}))<1$. 
This shows that condition (a) is sufficient for linear convergence. 
Conditions (b), (c) and (d) are needed only when $I_0\ne \emptyset$ for the reduced equation. 

Since $P^{-1}b\ge 0$ and $P^{-1}b\ne 0$, 
we have $\phi'(\bar{x})\bar{x}\le \bar{x}$ and $\phi'(\bar{x})\bar{x}\ne \bar{x}$.  Thus we still have \cite{BP94}  $\rho(\phi'(\bar{x}))<1$ if  $\phi'(\bar{x})$ 
is irreducible. From \eqref{phiex}, we see that $\phi'(\bar{x})$ is irreducible if and only if $P^{-1}\bar{\mathcal L}e^{m-2}$ is irreducible. 
Thus, condition (b) is also sufficient for linear convergence. 

Under condition (c), we just need to show linear convergence for the Jacobi iteration. Now, 
$$
\phi'(\bar{x})= {\rm diag} (\bar{x}^{[-(m-2)]})D^{-1}\bar{\mathcal B}{\bar{x}}^{m-2}.
$$
Since $\bar{\mathcal B} e^{m-2}$ is strictly upper or lower triangular, so is $\phi'(\bar{x})$. Thus $\rho(\phi'(\bar{x}))=0$ and linear convergence follows. 

Finally, we show that condition (d) is also sufficient for linear convergence. We now assume $P$ is diagonal. 
We modify $\bar{x}$ to $\hat{x}$ by changing $(\bar{x})_i$ to $(\bar{x})_i-\epsilon>0$  for each  $i\notin I_0$, where $0<\epsilon< 
\min_{i \notin I_0}(\bar{x})_i^{-(m-2)}(P^{-1}b)_i$. 
Now for $i\notin I_0$, 
$$(\phi'(\bar{x})\hat{x})_i\le (\phi'(\bar{x})\bar{x})_i=(\bar{x})_i-(\bar{x})_i^{-(m-2)}(P^{-1}b)_i<(\hat{x})_i. $$
For $i \in I_0$, 
$$(\phi'(\bar{x})\hat{x})_i\le (\phi'(\bar{x})\bar{x})_i=(\bar{x})_i=(\hat{x})_i. $$ 
Since $P$ is diagonal, $(\phi'(\bar{x})\hat{x})_i = (\phi'(\bar{x})\bar{x})_i$
if and only if $(\bar{\mathcal L}{\bar{x}}^{m-2}\hat{x})_i = (\bar{\mathcal L}{\bar{x}}^{m-2}\bar{x})_i$, i.e., 
\begin{eqnarray*}
&&\sum_{j=1}^n\left (\sum_{i_3, \ldots, i_m=1}^n \bar{\mathcal L}_{iji_3\ldots i_m}\bar{x}_{i_3}\cdots \bar{x}_{i_m}\right )\hat{x}_j\\
&=&\sum_{j=1}^n\left (\sum_{i_3, \ldots, i_m=1}^n \bar{\mathcal L}_{iji_3\ldots i_m}\bar{x}_{i_3}\cdots \bar{x}_{i_m}\right )\bar{x}_j. 
\end{eqnarray*}
This holds if and only if $\bar{\mathcal L}_{iji_3\ldots i_m}=0$ for all $j\notin I_0$ and for all $i_3, \ldots, i_m\in [n]$, 
i.e., ${\mathcal L}_{ii_2i_3\ldots i_m}=0$ for all $i_2, \ldots, i_m\in [n]$ such that $i_j\notin I_0$ for at least one $j$ ($2\le j\le m$). 
Therefore, when condition (d) holds, $(\phi'(\bar{x})\hat{x})_i < (\phi'(\bar{x})\bar{x})_i=(\hat{x})_i$ for each $i\in I_0$. 
Linear convergence follows since we again have $\rho(\phi'(\bar{x}))<1$. 
 \end{proof}

We now study iteration \eqref{eqM} for finding the maximal nonnegative solution. 

\begin{thm}\label{MMaxCon}
Let ${\mathcal A}$ be a nonsingular $M$-tensor and
suppose that  ${\mathcal S}_l=\{x\in {\mathbb R}_+ \ | \   {\mathcal A}x^{m-1}\le b\}\ne \emptyset$.  Then for any 
$x_0>0$ such that ${\mathcal A}x_0^{m-1}> 0$ and ${\mathcal A}x_0^{m-1}\ge b$. 
The sequence from iteration \eqref{eqM} is monotonically decreasing and converges to the maximal nonnegative solution of 
${\mathcal A}x^{m-1}=b$. 
\end{thm}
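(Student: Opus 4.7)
The plan is to adapt the proof of Theorem \ref{MMax} (the Jacobi case) to the more general splitting \eqref{eqM}. Two facts drive everything: since $P$ is a nonsingular $M$-matrix, $P^{-1}\ge 0$, and hence $Pu^{[m-1]}\ge Pv^{[m-1]}$ with $u,v\ge 0$ forces $u\ge v$; moreover the maps $x\mapsto Qx^{[m-1]}$ and $x\mapsto\mathcal{N}x^{m-1}$ are coordinatewise monotone on $\mathbb{R}_+$ because $Q\ge 0$ and $\mathcal{N}\ge 0$. From $\mathcal{A}x^{m-1}=Px^{[m-1]}-Qx^{[m-1]}-\mathcal{N}x^{m-1}$, both inequalities $\mathcal{A}x^{m-1}\ge b$ and $\mathcal{A}x^{m-1}\le b$ translate directly into comparisons between $Px^{[m-1]}$ and the right-hand side of \eqref{eqM}.

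First I would show that $x_0$ dominates every element of $\mathcal{S}_l$. Setting $\hat b=\mathcal{A}x_0^{m-1}>0$, the earlier corollary that a nonsingular $M$-tensor equation with strictly positive right-hand side has a unique nonnegative solution identifies $x_0$ as $x_{\max}(\mathcal{A},\hat b)$; together with $\hat b\ge b$ and Corollary \ref{corcomp}, this gives $x_0\ge x_{\max}(\mathcal{A},b)\ge x$ for every $x\in\mathcal{S}_l$, where the last inequality uses that by Theorem \ref{MMax} the maximal solution is also the maximal element of $\mathcal{S}_l$. I would then run an induction establishing, for each $k\ge 0$: $\mathcal{A}x_k^{m-1}\ge b$; $x_{k+1}$ is well-defined and nonnegative; $x_{k+1}\le x_k$; and $x_{k+1}\ge x$ for every $x\in\mathcal{S}_l$. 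The step $x_{k+1}\le x_k$ is immediate from $\mathcal{A}x_k^{m-1}\ge b$; the bound $x_{k+1}\ge x$ for $x\in\mathcal{S}_l$ follows from $x\le x_k$ and the monotonicity of the right-hand side of \eqref{eqM}, and it simultaneously delivers $x_{k+1}^{[m-1]}\ge x^{[m-1]}\ge 0$ so that the coordinatewise $(m-1)$-th root is meaningful; finally $\mathcal{A}x_{k+1}^{m-1}\ge b$ propagates via
\[
\mathcal{A}x_{k+1}^{m-1}=Q\bigl(x_k^{[m-1]}-x_{k+1}^{[m-1]}\bigr)+\mathcal{N}\bigl(x_k^{m-1}-x_{k+1}^{m-1}\bigr)+b,
\]
whose first two terms are nonnegative since $x_{k+1}\le x_k$ and $Q,\mathcal{N}\ge 0$.

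Once these invariants hold, $\{x_k\}$ is monotonically decreasing and bounded below by any element of $\mathcal{S}_l$, so $x_*=\lim_{k\to\infty}x_k$ exists; passing to the limit in \eqref{eqM} gives $\mathcal{A}x_*^{m-1}=b$, and the uniform lower bound $x_*\ge x$ for every $x\in\mathcal{S}_l$ identifies $x_*$ as the maximal nonnegative solution. The main obstacle I anticipate is the very first step: turning the comparatively weak hypothesis $\mathcal{A}x_0^{m-1}\ge b$ into the global domination $x_0\ge x$ for every $x\in\mathcal{S}_l$. The trick of taking $\hat b=\mathcal{A}x_0^{m-1}$ (which is strictly positive precisely because of the second hypothesis on $x_0$) and invoking the already-established uniqueness-for-positive-$b$ result together with Corollary \ref{corcomp} resolves this cleanly, and is the only place where $\mathcal{A}x_0^{m-1}>0$ is genuinely used.
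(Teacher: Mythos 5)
Your proof is correct and follows essentially the same route as the paper, which simply refers back to the proof of Theorem~\ref{MMax} and notes that the monotonicity fact ${\mathcal D}x^{m-1}\ge {\mathcal D}y^{m-1}\Rightarrow x\ge y$ should be replaced by ${P}x^{[m-1]}\ge {P}y^{[m-1]}\Rightarrow x\ge y$, which holds because $P^{-1}\ge 0$. You have filled in the details the paper leaves implicit, and in particular you handle the universality of the starting point a bit more directly: rather than re-running the $\hat b$-argument inside the proof as in Theorem~\ref{MMax}, you identify $x_0$ as the maximal solution for $\hat b={\mathcal A}x_0^{m-1}$ via the uniqueness corollary and then invoke the monotonicity in Corollary~\ref{corcomp}. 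These are equivalent; your version is slightly cleaner to state but rests on the same facts.
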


\begin{proof}
The proof is almost the same as the proof of Theorem \ref{MMax}. In that proof, we use the fact that 
${\mathcal D}x^{m-1}\ge {\mathcal D}y^{m-1}$ (with $x, y\ge 0$) implies $x\ge y$. 
We now use the fact ${P}x^{[m-1]}\ge {P}y^{[m-1]}$  implies $x\ge y$.   
\end{proof}

\begin{remark}
When $b\notin {\mathbb R}^+$, it may not be easy to determine whether ${\mathcal S}_l\ne \emptyset$. 
We may apply iteration \eqref{eqM} without checking this condition. In this case, ${\mathcal S}_l\ne \emptyset$ 
if and only if $P^{-1}\left (Qx_k^{[m-1]}+{\mathcal N}x_k^{m-1}+b\right )\ge 0$ for each $k\ge 0$. 
Indeed, if $P^{-1}\left (Qx_k^{[m-1]}+{\mathcal N}x_k^{m-1}+b\right )\ge 0$ for each $k\ge 0$, then we see from a proof similar to that of Theorem 
\ref{MMax} that $x_k\ge x_{k+1}\ge 0$ for all $k\ge 0$. In this case, $\lim_{k\to \infty}x_k=x_*$ exists and $x_*$ is a solution of ${\mathcal A}x^{m-1}=b$
 and thus 
${\mathcal S}_l\ne \emptyset$. 
\end{remark}

\begin{remark}
To obtain a suitable $x_0$ in Theorem \ref{MMaxCon}, we can always take a vector $\hat{b}>0$ with $\hat{b}\ge b$, and use the methods in 
\cite{DW16,H17,HLQZ18} to get the unique positive solution $x_*$ of the equation 
${\mathcal A}x^{m-1}=\hat{b}$ and then take $x_0=x_*$. 
When we need to solve the tensor equation with the same ${\mathcal A}$ and many different  right side vectors $b_1, \ldots, b_p$, we can take a vector  $\hat{b}>0$ with $\hat{b}\ge \max\{b_1,\ldots, b_p\}$ (where the maximum is taken elementwise) and get the unique positive solution $x_*$ 
of the equation ${\mathcal A}x^{m-1}=\hat{b}$ and then take $x_0=x_*$ for all equations 
${\mathcal A}x^{m-1}={b}_i$, $i=1,\ldots, p$. 
\end{remark}

\begin{thm}\label{ratecompM}
Under the conditions of Theorem \ref{MMaxCon}, 
let $\{x_k\}$  and $\{\hat{x}_k\}$ be the sequences generated by iteration \eqref{eqM} with two splittings of  $M$: 
$M=P-Q$ and $M=\hat{P}-\hat{Q}$, respectively, and with $x_0\ge \hat{x}_0>0$ satisfying 
$$
{\mathcal A}x_0^{m-1}>0, \quad  {\mathcal A}x_0^{m-1}\ge b,  \quad {\mathcal A}\hat{x}_0^{m-1}>0, \quad {\mathcal A}\hat{x}_0^{m-1}\ge b.
$$
If $\hat{Q}\le Q$,  then $x_k\ge \hat{x}_k$ for all $k\ge 0$. In other words, a smaller matrix $Q$ gives faster termwise convergence 
for finding the maximal solution. 
\end{thm}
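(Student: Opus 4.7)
The plan is to mirror the inductive comparison used in the proof of Theorem \ref{ratecompm}, with the direction of each monotonicity step reversed to reflect the fact that the sequences now approach the maximal solution from above rather than the minimal solution from below. The base case $x_0\ge \hat{x}_0$ is part of the hypothesis, and Theorem \ref{MMaxCon} guarantees that both sequences are well-defined, monotonically decreasing, and remain nonnegative. So the whole game is to show that $x_k\ge \hat{x}_k$ propagates to $x_{k+1}\ge \hat{x}_{k+1}$.

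First I would fix $k\ge 0$, assume $x_k\ge \hat{x}_k\ge 0$, and rewrite the iteration for $\hat{x}_{k+1}$ using $\hat{P}=P-(Q-\hat{Q})$, obtaining
$$
P\hat{x}_{k+1}^{[m-1]}=(Q-\hat{Q})\hat{x}_{k+1}^{[m-1]}+\hat{Q}\hat{x}_k^{[m-1]}+{\mathcal N}\hat{x}_k^{m-1}+b.
$$
Because $Q-\hat{Q}\ge 0$ and the monotonicity from Theorem \ref{MMaxCon} gives $\hat{x}_{k+1}\le \hat{x}_k$, the first term on the right is bounded above by $(Q-\hat{Q})\hat{x}_k^{[m-1]}$, so
$$
P\hat{x}_{k+1}^{[m-1]}\le Q\hat{x}_k^{[m-1]}+{\mathcal N}\hat{x}_k^{m-1}+b.
$$
Then, using $Q\ge 0$ together with the nonnegativity of ${\mathcal N}$ (which holds because ${\mathcal A}$ is a $Z$-tensor and ${\mathcal N}$ consists of the mixed off-diagonal entries with their signs flipped) and the inductive hypothesis $x_k\ge \hat{x}_k\ge 0$, I would bound the right side further by $Qx_k^{[m-1]}+{\mathcal N}x_k^{m-1}+b=Px_{k+1}^{[m-1]}$. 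Chaining these gives $P\hat{x}_{k+1}^{[m-1]}\le Px_{k+1}^{[m-1]}$.

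Finally, since $P$ is a nonsingular $M$-matrix (as noted after the definition of the level-2 splitting), $P^{-1}\ge 0$, so applying $P^{-1}$ componentwise yields $\hat{x}_{k+1}^{[m-1]}\le x_{k+1}^{[m-1]}$, hence $\hat{x}_{k+1}\le x_{k+1}$, closing the induction.

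The only real obstacle is bookkeeping the direction of each inequality: compared with the minimal-solution case, the bound on $(Q-\hat{Q})\hat{x}_{k+1}^{[m-1]}$ must now go upward (using $\hat{x}_{k+1}\le \hat{x}_k$) instead of downward, and the final step uses $P^{-1}\ge 0$ in precisely the same way as before. No new algebraic content beyond that is required.
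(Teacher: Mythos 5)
Your proof is correct and follows essentially the same route as the paper's: you rewrite the $\hat{P}$-iteration via $\hat{P}=P-(Q-\hat{Q})$, bound $(Q-\hat{Q})\hat{x}_{k+1}^{[m-1]}$ above using the monotone decrease of $\{\hat{x}_k\}$ from Theorem \ref{MMaxCon}, then use $Q,\mathcal{N}\ge 0$ with the induction hypothesis, and finish by applying $P^{-1}\ge 0$. The argument matches the paper's chain of inequalities step for step.
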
 

\begin{proof} We need to show that $x_k\ge \hat{x}_k$ implies $x_{k+1}\ge \hat{x}_{k+1}$  for each $k\ge 0$. 
We have  \eqref{eqM} and \eqref{eqhat}. 
Since $\hat{P}={P}-(Q-\hat{Q})$, we get from \eqref{eqhat} that 
\begin{eqnarray*}
P\hat{x}_{k+1}^{[m-1]}&=&(Q-\hat{Q})\hat{x}_{k+1}^{[m-1]}+\hat{Q}\hat{x}_k^{[m-1]}+{\mathcal N}\hat{x}_k^{m-1}+b\\
&\le&(Q-\hat{Q})\hat{x}_{k}^{[m-1]}+\hat{Q}\hat{x}_k^{[m-1]}+{\mathcal N}\hat{x}_k^{m-1}+b\\
&=& Q\hat{x}_{k}^{[m-1]}+{\mathcal N}\hat{x}_k^{m-1}+b\\
&\le & Q x_{k}^{[m-1]}+{\mathcal N}{x}_k^{m-1}+b\\
&=&Px_{k+1}^{[m-1]}. 
\end{eqnarray*}
Thus  $x_{k+1}\ge \hat{x}_{k+1}$.  
\end{proof}

We now study the convergence rate of iteration \eqref{eqM} for finding the maximal nonnegative solution of \eqref{Teq}, under the conditions of Theorem \ref{MMaxCon}. In our first result we assume that the maximal nonnegative solution is positive. We know from \cite[Theorem 2.4]{LGX20} that  
every nonnegative solution of \eqref{Teq} is positive if ${\mathcal A}$ is irreducible and $b\ge 0$ is nonzero. 
We also know from Corollary \ref{corcomp} that if the maximal nonnegative solution of ${\mathcal A}x^{m-1}=b$ is positive, then 
the maximal nonnegative solution of $\tilde{{\mathcal A}}x^{m-1}=\tilde{b}$ will also be positive if $\tilde{{\mathcal A}}$ and $\tilde{b}$ are obtained from 
${\mathcal A}$ and $b$ in ways described there.

\begin{thm}\label{rateMaxP}
Let ${\mathcal A}$ be a nonsingular $M$-tensor and suppose that  ${\mathcal S}_l=\{x\in {\mathbb R}_+ \ | \   {\mathcal A}x^{m-1}\le b\}\ne \emptyset$.  
Let $x_{\max}$ be the maximal nonnegative solution of ${\mathcal A}x^{m-1}=b$ and assume that $x_{\max}>0$.  Let $I_0=\{i \ | \ b_i=0\}$. 
Then for any $x_0>0$ such that  ${\mathcal A}x_0^{m-1}> 0$ and ${\mathcal A}x_0^{m-1}\ge b$,  
the sequence  $\{x_k\}$ from iteration \eqref{eqM} converges to $x_{max}$ linearly 
under any of the following four conditions: 
\begin{enumerate}
\item $P^{-1}b>0$.
\item $P^{-1}\bar{\mathcal L} e^{m-2}$ is irreducible, where $e$ is the vector of ones and $\bar{\mathcal L}$ is the semi-symmetric tensor from ${\mathcal L}$. 
\item With all diagonal elements of ${\mathcal A}$ included entirely in $P$ and ${\mathcal B}={\mathcal D}-{\mathcal A}$, the 
matrix $\bar{\mathcal B} e^{m-2}$ is strictly upper or lower triangular.
\item For each $i\in I_0$, there is an element $a_{ii_2\ldots i_m}\ne 0$, where $i_j\notin I_0$ for at least one $j$ ($2\le j\le m$). 
\end{enumerate}
\end{thm}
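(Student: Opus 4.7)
My plan is to mirror the argument used for conclusion 3 of Theorem \ref{rateMin}, with one simplification: since $x_{\max}>0$ is assumed here from the outset, no reduction to a lower-dimensional subsystem is needed, and I may work directly with the full equation. By Theorem \ref{MMaxCon}, the sequence $\{x_k\}$ from iteration \eqref{eqM} is monotonically decreasing and converges to $x_{\max}$. Since $x_{\max}>0$, the iterates eventually stay in a positive neighborhood on which the explicit form $x_{k+1}=\phi(x_k)$, with
$$
\phi(x)=\left(P^{-1}({\mathcal L}x^{m-1}+b)\right)^{[1/(m-1)]},
$$
is smooth. By the standard Ostrowski-type theorem for smooth fixed-point iterations converging to a differentiable fixed point, linear convergence will follow once I establish $\rho(\phi'(x_{\max}))<1$.

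Next I would reuse the Jacobian computation from the proof of Theorem \ref{rateMin} to obtain
$$
\phi'(x_{\max})={\rm diag}\bigl(x_{\max}^{[-(m-2)]}\bigr)P^{-1}\bar{\mathcal L}x_{\max}^{m-2}
$$
together with the key identity
$$
\phi'(x_{\max})x_{\max}=x_{\max}-{\rm diag}\bigl(x_{\max}^{[-(m-2)]}\bigr)P^{-1}b,
$$
both of which require only $x_{\max}>0$ and ${\mathcal A}x_{\max}^{m-1}=b$. Since $P^{-1}\ge 0$ and $b\ge 0$, the subtracted vector is nonnegative, placing us in exactly the same algebraic setting as in the earlier proof.

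Each of conditions (a)--(d) would then be handled essentially verbatim. Under (a), $P^{-1}b>0$ gives $\phi'(x_{\max})x_{\max}<x_{\max}$ strictly, which forces $\rho(\phi'(x_{\max}))<1$. Under (b), $\phi'(x_{\max})$ is irreducible (it has the same zero pattern as $P^{-1}\bar{\mathcal L}e^{m-2}$, because $x_{\max}>0$), so combining irreducibility with $\phi'(x_{\max})x_{\max}\le x_{\max}$ and $\phi'(x_{\max})x_{\max}\ne x_{\max}$ (since $P^{-1}b\ne 0$) gives the same conclusion. Under (c), taking $P$ to contain all diagonal elements of ${\mathcal A}$ makes $\phi'(x_{\max})$ strictly triangular, hence nilpotent with spectral radius zero. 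Under (d), I would introduce the perturbation $\hat x$ with $(\hat x)_i=(x_{\max})_i-\epsilon$ for $i\notin I_0$ and $(\hat x)_i=(x_{\max})_i$ for $i\in I_0$, for $\epsilon>0$ sufficiently small, and exploit the structural nonzero hypothesis on ${\mathcal A}$ to obtain $\phi'(x_{\max})\hat x<\hat x$ componentwise, again yielding $\rho(\phi'(x_{\max}))<1$.

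The only subtle point, which I would treat as the main obstacle, is verifying that the downward monotone approach legitimately activates the linearization argument at $x_{\max}$. This is fine because Theorem \ref{MMaxCon} guarantees convergence to $x_{\max}$ and the positivity of $x_{\max}$ places the iteration in a region where $\phi$ is $C^\infty$ from some index onward, so the Jacobian analysis operates exactly as in the minimal-solution case. The conclusion is thus inherited from the structure already built in Theorem \ref{rateMin}: conditions (a)--(d) depend only on the positivity of the target fixed point and on the algebraic structure of $\phi$ at that point, and both are present by hypothesis here.
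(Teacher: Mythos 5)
Your approach is essentially the one the paper takes: replay the proof of conclusion 3 of Theorem \ref{rateMin}, skipping the dimension-reduction step since $x_{\max}>0$ is assumed from the start. The Jacobian computation, the identity $\phi'(x_{\max})x_{\max}=x_{\max}-{\rm diag}\bigl(x_{\max}^{[-(m-2)]}\bigr)P^{-1}b$, and the arguments under conditions (a) and (b) carry over exactly as you say. Two points are glossed over, one minor and one more substantive.

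Minor: in (b) you use $P^{-1}b\ne 0$, which needs $b\ne 0$; the paper opens the proof by noting that $b=0$ would force $x_{\max}=0$, contradicting $x_{\max}>0$. You should say this.

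More substantive: your handling of (c) is not correct as stated. You claim that ``taking $P$ to contain all diagonal elements of ${\mathcal A}$ makes $\phi'(x_{\max})$ strictly triangular,'' but condition (c) permits $P$ to absorb off-diagonal entries of $M$ as well, in which case $P^{-1}$ generally has a dense zero pattern and $\phi'(x_{\max})={\rm diag}\bigl(x_{\max}^{[-(m-2)]}\bigr)P^{-1}\bar{\mathcal L}x_{\max}^{m-2}$ need not be triangular. Likewise, the perturbation argument you invoke for (d) (mirroring Theorem \ref{rateMin}) works only when $P$ is diagonal: diagonality of $P$ is what lets one translate equality in $(\phi'(x_{\max})\hat{x})_i$ into equality in $(\bar{\mathcal L}x_{\max}^{m-2}\hat{x})_i$ and then read off the structural nonzero hypothesis on ${\mathcal A}$. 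In both cases the paper closes the gap by first invoking Theorem \ref{ratecompM}: the Jacobi splitting ($P=D$, largest admissible $Q$) gives the slowest termwise convergence from above, so $0\le \hat{x}_k - x_{\max}\le x_k^{\rm Jac}-x_{\max}$ and linear convergence of the Jacobi iterates is inherited by every admissible splitting. You need to make this reduction to the diagonal-$P$ case explicit before your arguments for (c) and (d) are valid.
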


\begin{proof}
When $b=0$, ${\mathcal A}x^{m-1}=b$ has a unique solution $x=0$. Thus $b\ne 0$ when $x_{\max}>0$. 
Exactly as in the proof of Theorem \ref{rateMin} (we do not need the reduction process there since we assume $x_{\max}>0$), we can show that the iteration map $\phi$ is such that 
$\rho(\phi'(x_{\max}))<1$ under any of the four conditions in the theorem. In proving linear convergence under condition (c) or (d), 
we may assume that $P$ is diagonal. 
\end{proof}

We now assume $b\ge 0$, but allow $x_{\max}$ to have some zero elements. 

\begin{thm}\label{rateMax}
Let ${\mathcal A}$ be a nonsingular $M$-tensor and $b\in {\mathbb R}_+$ be nonzero. 
Let $x_{\max}$ be the maximal nonnegative solution of ${\mathcal A}x^{m-1}=b$.  Let $I_0=\{i \ | \ b_i=0\}$. 
Then for iteration \eqref{eqM} with any $x_0>0$ such that ${\mathcal A}x_0^{m-1}> 0$ and ${\mathcal A}x_0^{m-1}\ge b$,  
there is a smallest integer $k_0$ ($0\le k_0\le n-1$) such that $x_{k_0}$ and $x_{k_0+1}$ have the same zero pattern (including the case with no zero elements), and $x_k$ have the same zero pattern for all $k\ge k_0$. Let $I=\{i \ | \ (x_{k_0})_i=0\}$ (which may be empty). Then $I \subseteq I_0$. Let $I_c=[n]\setminus I$. 
Then the iteration \eqref{eqM} for $k\ge k_0$ is reduced to an iteration for the lower-dimensional tensor equation:  
\begin{equation}\label{eqredM}
{\mathcal A}_{I_c}\hat{x}^{m-1}=b_{I_c}. 
\end{equation}
For $k\ge k_0$, $\hat{x}_k$ from the reduced iteration is the same as $(x_{k})_{I_c}$.
The maximal nonnegative solution $\hat{x}_{\max}$ of \eqref{eqredM} is the same as $(x_{\max})_{I_c}$ (which is not necessarily positive).  
Thus  $x_{k}$ converges to $x_{\max}$ linearly if and only if $\hat{x}_{k}$ converges to $\hat{x}_{\max}$ linearly. 
Assume ${\mathcal A}{x}^{m-1}=b$ is already the reduced equation and assume that its maximal solution $x_{\max}$ is positive. Then $x_{k}$ converges to $x_{\max}$ linearly under any of the following four conditions: 
\begin{enumerate}
\item $P^{-1}b>0$.
\item $P^{-1}\bar{\mathcal L} e^{m-2}$ is irreducible, where $e$ is the vector of ones and $\bar{\mathcal L}$ is the semi-symmetric tensor from ${\mathcal L}$. 
\item With all diagonal elements of ${\mathcal A}$ included entirely in $P$ and ${\mathcal B}={\mathcal D}-{\mathcal A}$, the 
matrix $\bar{\mathcal B} e^{m-2}$ is strictly upper or lower triangular.
\item For each $i\in I_0$, there is an element $a_{ii_2\ldots i_m}\ne 0$, where $i_j\notin I_0$ for at least one $j$ ($2\le j\le m$). 
\end{enumerate}
\end{thm}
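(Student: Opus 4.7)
The plan is to mirror the proof of Theorem \ref{rateMin}, with the monotonically \emph{decreasing} sequence supplied by Theorem \ref{MMaxCon} playing the role of the monotonically increasing one there. The structural components -- stabilization of the zero pattern, reduction to a lower-dimensional equation, and spectral analysis at the fixed point -- transfer directly, and the four sufficient conditions on the reduced equation are the same as those in Theorem \ref{rateMaxP}, which I would invoke at the end. The main obstacle is the reduction bookkeeping, especially the identification $(x_{\max})_{I_c} = \hat{x}_{\max}$.

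First I would analyze $Z_k = \{i : (x_k)_i = 0\}$. Monotone decrease $x_{k+1}\le x_k$ gives $Z_k\subseteq Z_{k+1}$, and the bound $x_k\ge x_{\max}$ gives $Z_k\subseteq Z_{\max}$. Since $\mathcal{A}$ is a $Z$-tensor, $(x_{\max})_i=0$ makes the diagonal contribution to $(\mathcal{A}x_{\max}^{m-1})_i$ vanish while all remaining summands are nonpositive (off-diagonal entries times nonnegative products), whence $b_i=(\mathcal{A}x_{\max}^{m-1})_i\le 0$ forces $b_i=0$; thus $Z_{\max}\subseteq I_0$. Because $b\ne 0$ one has $|Z_k|\le n-1$, and since $|Z_0|=0$ and $|Z_k|$ is an integer-valued non-decreasing sequence, it must stabilize at some smallest $k_0\le n-1$, at which point the set $Z_k$ itself stabilizes. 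Setting $I=Z_{k_0}$ one obtains $I\subseteq I_0$ and $Z_k=I$ for all $k\ge k_0$.

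Next I would verify the reduction. For $k\ge k_0$ and any multi-index $(i,i_2,\ldots,i_m)$ with some $i_j\in I$, $j\ge 2$, the product $(x_k)_{i_2}\cdots (x_k)_{i_m}$ vanishes; together with $(x_k)_j=0$ for $j\in I$ this makes the $i\in I$ rows of iteration \eqref{eqM} the trivial identity $0=0$, while the $i\in I_c$ rows agree with iteration \eqref{eqM} applied to \eqref{eqredM} under the inherited splittings $M_{I_c}=P_{I_c}-Q_{I_c}$ and $\mathcal{A}_{I_c}=\mathcal{M}_{I_c}-\mathcal{N}_{I_c}$. The subtensor $\mathcal{A}_{I_c}$ remains a nonsingular $M$-tensor: given $y>0$ with $\mathcal{A}y^{m-1}>0$, the deleted cross terms for an index $i\in I_c$ are off-diagonal (hence nonpositive) $\mathcal{A}$-entries times nonnegative products, so $\mathcal{A}_{I_c}y_{I_c}^{m-1}\ge(\mathcal{A}y^{m-1})_{I_c}>0$. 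Consequently $\hat{x}_k=(x_k)_{I_c}$ for $k\ge k_0$ and $\hat{x}_k\to(x_{\max})_{I_c}$. To identify this limit with $\hat{x}_{\max}$, pad $\hat{x}_{\max}$ with zeros on $I$ to form $z\in\mathbb{R}^n$; for $i\in I_c$ the $Z$-tensor vanishing of cross terms involving $I$ gives $(\mathcal{A}z^{m-1})_i=(\mathcal{A}_{I_c}\hat{x}_{\max}^{m-1})_i=b_i$, and for $i\in I\subseteq I_0$ the remaining off-diagonal entries of $\mathcal{A}$ are nonpositive so $(\mathcal{A}z^{m-1})_i\le 0=b_i$. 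Hence $z\in\mathcal{S}_l$, $z\le x_{\max}$ by Theorem \ref{MMax}, and $\hat{x}_{\max}=z_{I_c}\le(x_{\max})_{I_c}$; the reverse inequality is immediate since $(x_{\max})_{I_c}$ is itself a nonnegative solution of \eqref{eqredM}.

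Equivalence of linear convergence of $\{x_k\}$ and $\{\hat{x}_k\}$ follows at once, since their $I_c$-components agree for $k\ge k_0$ and the $I$-components of $\{x_k\}$ are eventually zero. Under the final assumption that the equation is already reduced and $x_{\max}>0$, conditions (a)--(d) are word-for-word those of Theorem \ref{rateMaxP}, so a direct appeal to that theorem yields linear convergence and completes the proof.
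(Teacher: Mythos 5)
Your proposal is correct and follows essentially the same route the paper takes: monotone decrease and positivity of $x_0$ give zero-pattern stabilization by step $n-1$, the zero rows drop out to produce the reduced equation, and the spectral argument of Theorem~\ref{rateMin} (equivalently, Theorem~\ref{rateMaxP}) handles the positive reduced case. The one place you go beyond the paper's sketch is the explicit padding argument establishing $\hat{x}_{\max}=(x_{\max})_{I_c}$; the paper leaves this identification implicit, so your added detail is a welcome filling-in rather than a departure.
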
 

\begin{proof}
We have $x_0>0$ and $x_k\ge x_{k+1}$ for all $k\ge 0$. Each $x_k$ has at least one nonzero entry since $x_{\max}\ne 0$. 
Thus, there is a smallest integer $k_0$ ($0\le k_0\le n-1$) such that $x_{k_0}$ and $x_{k_0+1}$ have the same zero pattern. 
Since $b\ge 0$, $x_k$ have the same zero pattern for all $k\ge k_0$. It is clear that  $I \subseteq I_0$. The reduced equation \eqref{eqredM} is then obtained as in the proof of Theorem \ref{rateMin}. However, its maximal solution $x_{\max}$ (as the limit of a positive sequence) may have some zero elements. With the additional assumption that $x_{\max}>0$ for the reduced equation, the proof of linear convergence is exactly the same as in the proof of Theorem \ref{rateMin}.  
\end{proof}

\begin{remark}
The reduction in Theorem \ref{rateMin} is obtained from the iteration for finding the minimal solution and the reduction in Theorem \ref{rateMax} is obtained from the iteration for finding the maximal solution. A reduction process has also been described in \cite{LGX20}, without mentioning maximal and minimal solutions. We will explain that one can only find a nonnegative solution with the same number of nonzero elements as the minimal solution by finding a positive solution of the reduced equation in \cite{LGX20}. 
\end{remark}

To describe the approach in \cite{LGX20}, we recall the following definition. 

\begin{definition} 
A tensor ${\mathcal A}\in {\mathbb R}^{[m,n]}$  is called reducible with respect to $I \subset [n]$ if its
elements satisfy
$$
a_{i_1i_2\dots i_m}= 0, \forall i_1\in I, \forall i_2,\ldots, i_m\notin I.
$$
\end{definition}

The next result has been presented in \cite{LGX20} (see Corollary 2.8 there). 

\begin{thm}\label{LiRed}
Suppose that ${\mathcal A}$ is a nonsingular $M$-tensor and $b\in {\mathbb R}^n_+$ is nonzero. 
Then there is an index set $I\subseteq  I_0$ (which could be empty) such that every nonnegative solution to the following lower dimensional tensor
equation with $I_c = [n]\setminus I$ 
$$
{\mathcal A}_{I_c}x^{m-1}_{I_c}= b_{I_c}
$$
is positive. Moreover, every positive solution $x_{I_c}$ of the last equation together with $x_I = 0$
forms a nonnegative solution to equation \eqref{Teq}. 
\end{thm}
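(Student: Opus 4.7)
The plan is to construct the set $I$ directly from the minimal nonnegative solution of ${\mathcal A}x^{m-1}=b$. Existence of a minimal nonnegative solution $x_{\min}$ is already guaranteed by Theorem \ref{thmmin} together with the corollary that follows it, since ${\mathcal A}$ being a nonsingular $M$-tensor gives ${\mathcal A}\hat{x}^{m-1}>0$ for some $\hat{x}>0$ and hence nonempty ${\mathcal S}_g$. I would set
$$
I := \{\, i\in[n] \ | \ (x_{\min})_i = 0 \,\}, \qquad I_c := [n]\setminus I,
$$
and show in turn: (i) $I \subseteq I_0$; (ii) ${\mathcal A}$ is reducible with respect to $I$ in the sense of the preceding definition; (iii) ${\mathcal A}_{I_c}$ is a nonsingular $M$-tensor and $(x_{\min})_{I_c}>0$ is the minimal nonnegative solution of the reduced equation ${\mathcal A}_{I_c}x^{m-1}_{I_c}=b_{I_c}$.

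The central step combines (i) and (ii) in one computation. For any $i\in I$, reading off the $i$-th component of ${\mathcal A}x_{\min}^{m-1}=b$: the diagonal contribution $a_{ii\ldots i}(x_{\min})_i^{m-1}$ vanishes because $(x_{\min})_i=0$, so
$$
b_i \;=\; \sum_{(i_2,\ldots,i_m)\ne (i,\ldots,i)} a_{ii_2\ldots i_m}\,(x_{\min})_{i_2}\cdots(x_{\min})_{i_m}.
$$
Every coefficient on the right is nonpositive ($Z$-tensor property) and every product is nonnegative, so the right-hand side is $\le 0$; combined with $b_i\ge 0$, this forces $b_i=0$ (giving $I\subseteq I_0$) and each individual summand to equal $0$. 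Whenever $i_2,\ldots, i_m\in I_c$, the product $(x_{\min})_{i_2}\cdots(x_{\min})_{i_m}$ is strictly positive, so the coefficient $a_{ii_2\ldots i_m}$ must itself vanish; this is precisely reducibility of ${\mathcal A}$ with respect to $I$.

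The rest is essentially lift-and-project. The subtensor ${\mathcal A}_{I_c}$ inherits the nonsingular $M$-tensor property: if ${\mathcal A}\hat{x}^{m-1}>0$ with $\hat{x}>0$, then for $i\in I_c$ the value $({\mathcal A}_{I_c}\hat{x}_{I_c}^{m-1})_i$ differs from $({\mathcal A}\hat{x}^{m-1})_i$ only by the sum of the dropped off-diagonal terms, each of which is nonpositive, so it is still positive. From reducibility and the zero pattern of $x_{\min}$, the vector $(x_{\min})_{I_c}$ does solve the reduced equation. For any nonnegative solution $y$ of that reduced equation, padding $y$ with zeros on $I$ yields $\tilde{y}\in{\mathbb R}^n_+$ satisfying ${\mathcal A}\tilde{y}^{m-1}=b$: components indexed by $I_c$ match by construction, and components indexed by $I$ vanish because reducibility kills every term that survives the zero-padding. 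Thus $\tilde{y}\ge x_{\min}$, hence $y\ge (x_{\min})_{I_c}>0$, which proves that every nonnegative solution of the reduced equation is positive. The ``Moreover'' clause of the theorem is exactly the zero-padding statement just made.

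The main obstacle is the reducibility step: one must extract simultaneously ``$b_i=0$'' and ``$a_{ii_2\ldots i_m}=0$ for all $i_2,\ldots, i_m\in I_c$'' from the single scalar identity $({\mathcal A}x_{\min}^{m-1})_i=b_i$ at $i\in I$, using only the sign structure of a $Z$-tensor and the nonnegativity of $x_{\min}$. Once reducibility is secured, the correspondence between nonnegative solutions of the full system and of the reduced one is routine.
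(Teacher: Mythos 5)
Your proof is correct, and it is self-contained and constructive. The paper itself does not reprove Theorem \ref{LiRed}; it simply cites \cite{LGX20} (Corollary 2.8) and then, in the subsequent discussion, observes that the set $I$ in that theorem can be identified with the zero pattern of the minimal nonnegative solution $x_{\min}$ (equivalently, the zero pattern stabilized by the iteration \eqref{eqM} with $x_0=0$). Your argument supplies exactly the proof that this observation suggests: you take $I=\{i : (x_{\min})_i=0\}$, extract both $I\subseteq I_0$ and the reducibility of ${\mathcal A}$ with respect to $I$ from the sign structure of the $i$-th component equation at $i\in I$ (nonpositive off-diagonal coefficients forcing $b_i=0$ and termwise vanishing), verify that ${\mathcal A}_{I_c}$ inherits the nonsingular $M$-tensor property, and use minimality of $x_{\min}$ together with zero-padding to conclude that every nonnegative solution of the reduced system dominates the positive vector $(x_{\min})_{I_c}$. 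The only thing you do not address (and are not asked to) is the paper's additional remark that this $I$ is the \emph{largest} index set within $I_0$ with respect to which ${\mathcal A}$ is reducible; that claim is not part of the theorem's statement and your proof does not need it. In short, the paper defers the proof to an external reference while you give a direct proof that matches the mechanism the paper describes informally afterwards; your version has the advantage of making the link between $I$ and $x_{\min}$ completely explicit, which is precisely the point the paper wants the reader to take away.
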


By comparing the discussions in \cite{LGX20} and in this paper, we can see that the index set $I\subseteq  I_0$ in Theorem \ref{LiRed} is the largest set such that 
 ${\mathcal A}$ is reducible with respect to $I$. 
We also see that the index set $I$ can be determined automatically by iteration \eqref{eqM} for computing the minimal nonnegative solution, with $x_0=0$.  
Indeed, $I=\{i \ | \ (x_k)_i=0\}$,  where  $k$ is the smallest integer such that the vectors $x_k$ and $x_{k+1}$ from iteration \eqref{eqM} have the same zero pattern (see Theorem \ref{rateMin}; $I=\emptyset$ when $b>0$). This is a very easy way to determine the set $I$. It would be much more expensive to determine $I$ by using the definition of reducibility to find 
the largest index set $I\subseteq  I_0$ such that ${\mathcal A}$ is reducible with respect to $I$. 

The numerical methods in \cite{LGX20} and \cite{LXG21} for computing a nonnegative solution of \eqref{Teq}  are based on Theorem \ref{LiRed}. 
They have quadratic convergence under suitable assumptions. For example, 
assuming equation \eqref{Teq} has already been reduced, the assumption needed for a Newton method in \cite{LXG21} is that 
for each $i\in I_0$, there is an element $a_{ii_2\ldots i_m}\ne 0$ with all $i_j\notin I_0$ ($j=2,\ldots, m$). So the assumption is stronger than our 
condition 3(d) 
in Theorem \ref{rateMin} for the linear convergence of our simple iteration for finding the minimal solution. 
From our comments on Theorem \ref{LiRed}, we know that the methods in \cite{LGX20} and \cite{LXG21}  can only find one of the nonnegative solutions that has the same number of zero elements as the minimal solution.
In particular, they will never find the maximal solution if it have more nonzero elements than the minimal solution. 
Moreover, to use the methods in \cite{LGX20} and \cite{LXG21}, the tensor ${\mathcal A}$ should be semi-symmetrized first, which increases computational work and makes the tensor much less sparse. 

\begin{example}
We consider Example \ref{ex1} with $k=1$. Then equation \eqref{Teq} has two solutions: $[0,1]^T$ and $[2,1]^T$. Note that $I_0=\{1\}$. 
We have $I=\{1\}$ for Theorem \ref{LiRed} since ${\mathcal A}$ is reducible with respect to $\{1\}$. The reduced equation is $x_2^3=1$ with nonnegative solution $x_2=1$, which is positive. A nonnegative solution of the original equation is then $[0,1]^T$ by Theorem \ref{LiRed}, but the other solution is lost in the reduction. 
We now apply Theorem \ref{rateMin}. With $x_0=[0,0]^T$, we get $x_1=[0,1]^T$ and $x_2=[0,1]^T$. So $k_0=1$ and $I=\{1\}$ in Theorem \ref{rateMin}. 
The reduced equation is $x_2^3=1$ with minimal nonnegative solution $x_2=1$, which is positive, and the minimal nonnegative solution of the original equation is then $[0,1]^T$ by Theorem \ref{rateMin} (we got this solution after just one iteration). 
We then apply Theorem \ref{rateMax}. With $x_0=[3,1]^T$, we get $x_1=[18^{1/3},1]^T$. So $k_0=0$ and $I=\{1,2\}$ in Theorem \ref{rateMax}. 
The equation is thus not reduced. We have $x_k=[s_k,1]^T$, where $s_k$ is determined by the iteration: $t_0=3$, $t_{k+1}=(2t_k^2)^{1/3}$. 
So $t_k$ converges to $2$ linearly with rate $2/3$, and $x_k$ converges to $[2,1]^T$ at the same rate. 
The linear convergence of $\{x_k\}$ is also guaranteed by Theorem \ref{rateMaxP} since condition 4 there is satisfied ($a_{1112}\ne 0$). 
\end{example}

There are examples for which iteration \eqref{eqM} converges linearly with a rate very close to $1$. We consider another extension of Example 1.1 in 
\cite{BHLZ21}. 

\begin{example}
We consider equation \eqref{Teq} with ${\mathcal A}\in {\mathbb R}^{[m,2]}$ given by $a_{11\ldots 1}=a_{22\ldots 2}=1$ and $a_{11\ldots 12}=-2$ and with 
$b=[0,1]^T$. Then equation \eqref{Teq} has two solutions: $[0,1]^T$ and $[2,1]^T$. Note that $I_0=\{1\}$. 
With $x_0=[3,1]^T$, we get $x_1=[(2\cdot 3^{m-2})^{1/(m-1)},1]^T$. So $k_0=0$ and $I=\{1,2\}$ in Theorem \ref{rateMax}. 
The equation is thus not reduced. We have $x_k=[s_k,1]^T$, where $s_k$ is determined by the iteration: $t_0=3$, $t_{k+1}=(2t_k^{m-2})^{1/(m-1)}$. 
So $t_k$ converges to $2$ linearly with rate $(m-2)/(m-1)$, and $x_k$ converges to $[2,1]^T$ at the same rate. 
The linear convergence of $\{x_k\}$ is also guaranteed by Theorem \ref{rateMaxP} since $a_{11\ldots 12}\ne 0$. 
However, when $m$ is large, the rate is very close to $1$, so $x_k$ converges to $[2,1]^T$ very slowly. 
\end{example}

\begin{remark}
We have the freedom to choose the splitting $M=P-Q$ for iteration \eqref{eqM}, when ${\mathcal A}$ is a nonsingular $M$-tensor. Recall that a smaller $Q$ is 
going to give faster termwise convergence (see Theorems \ref{ratecompm} and \ref{ratecompM}). For a dense tensor with order $m\ge 4$, computing ${\mathcal N}x^{m-1}$ will 
require $O(n^m)$ flops (for large $n$ and fixed $m$) and solving the linear system $My=c$ requires $O(n^3)$ flops. So it is advisable to use $P=M$ and $Q=0$ in the splitting $M=P-Q$ for a dense tensor with order $m\ge 4$. For a dense tensor with order $3$, it is advisable to take $P$ to be the lower triangular part of $M$ or the upper triangular part of $M$, since solving the linear system $Py=c$ requires $O(n^2)$ flops in this case. 
\end{remark}

\section{Conclusion}
\label{sec4}

We have presented new proofs for the existence of extremal nonnegative solutions of the $M$-tensor equation with a nonnegative right side vector by using some simple fixed-point iterations. We have studied these iterative methods and their generalizations.  With a suitable starting point, each of these methods has 
monotonic convergence (to the maximal nonnegative solution or to the minimal nonnegative solution) and the rate of convergence is (at least) linear under some mild assumptions. 
These methods are currently the only methods that are guaranteed to compute the maximal nonnegative solution or the minimal nonnegative solution with suitable initial guesses. There are examples for which the presented iterations have linear convergence at a rate very close to $1$. Iterative methods with faster convergence (without increasing computational work each iteration by too much) are still desirable. Those methods should be ones that can compute the maximal nonnegative solution and/or the minimal nonnegative solution, not just any one of the nonnegative solutions.

\section*{Acknowledgments} 
The author was supported in part by an NSERC Discovery Grant.

\end{document}